\documentclass[11pt,twoside]{amsart}

\bibliographystyle{alpha}

\setlength{\textwidth}{15.7cm}
\setlength{\textheight}{22cm}
\hoffset=-50pt

\parindent0cm
\parskip0.1cm

\usepackage{latexsym}
\usepackage{epsfig}
\usepackage{amsmath}
\usepackage{amsthm}
\usepackage{amsfonts}
\usepackage{exscale}
\usepackage{graphicx}
\usepackage{color}

\newtheorem{cor}{Corollary}
\newtheorem{prop}{Proposition}

\newtheorem{remark}{Remark}
\newtheorem{lemma}{Lemma}
\newtheorem{theorem}{Theorem}

\newtheorem{assumption}{Assumption}

\newcommand{\sm}{\left(\begin{smallmatrix}}
\newcommand{\esm}{\end{smallmatrix}\right)}

\newcommand{\la}{\lambda}

\newcommand{\codim}{\operatorname{codim}}

\newcommand{\eps}{\varepsilon}

\newcommand{\Bt}{\widetilde{B}}

\newcommand{\C}{\ensuremath{\mathbb{C}}}
\newcommand{\R}{\ensuremath{\mathbb{R}}}
\newcommand{\N}{\ensuremath{\mathbb{N}}}
\newcommand{\Z}{\ensuremath{\mathbb{Z}}}

\newcommand{\norm}[1]{\left\Vert#1\right\Vert}

\newcommand{\bk}{\mathbf{k}}
\newcommand{\bx}{\mathbf{x}}

\newcommand{\be}{\begin{equation*}}
\newcommand{\ee}{\end{equation*}}
\newcommand{\bea}{\begin{eqnarray*}}
\newcommand{\eea}{\end{eqnarray*}}
\newcommand{\ben}{\begin{eqnarray}}
\newcommand{\een}{\end{eqnarray}}
\newcommand{\beq}{\begin{equation}}
\newcommand{\eeq}{\end{equation}}
\newcommand{\enq}{\end{equation}}
\def\llangle{\left\langle}\def\rrangle{\right\rangle}

\hyphenation{wave guides}

\title{Spectrum created by line defects in periodic structures}

\author{B.M.Brown}
\address{Cardiff School of Computer Science,
Cardiff University, Cardiff, CF24 3AA, Wales, UK}
\email{Malcolm.Brown@cs.cardiff.ac.uk}
\author{V.Hoang}
\address{Institute for Analysis,
Karlsruhe Institute of Technology (KIT), Kaiserstrasse 89, Karlsruhe,
Germany}\email{ duy.hoang@kit.edu}
\author{M.Plum} \address{Institute for Analysis,
Karlsruhe Institute of Technology (KIT), Kaiserstrasse 89, Karlsruhe,
Germany}\email{michael.plum@kit.edu}
\author{I.Wood} \address{School of Mathematics, Statistics and Actuarial Sciences,
 University of Kent, Canterbury, CT2 7NF, UK}\email{ i.wood@kent.ac.uk}

\begin{document}

\maketitle
 
\section{Introduction}

The spectrum of periodic differential operators typically exhibits a band-gap structure.
In this paper, we will consider  perturbations to periodic differential operators and investigate the spectrum the perturbation induces in the gaps. More specifically, we consider the operator
$$
L_0 =-\frac{1}{\eps_0(x,y,z)}\Delta
$$
in $\R^3$ with $\eps_0$ periodic in all three directions. The perturbation is introduced by replacing $\eps_0$ by $\eps_0+\eps_1$ where we assume that $\eps_1$ is still periodic in one direction, but compactly supported in the remaining two directions, creating a line defect. We will show that even small perturbations $\eps_1$ lead to additional spectrum in the spectral gaps of the unperturbed operator $L_0$ and investigate some properties of the spectrum that is created.

Vector-valued equations of this form, coupled by a curl-condition (see, e.g.~\cite{Fey}), arise in the study of elastic wave propagation in phononic crystals. The present paper is a first step in the investigation of these problems.
The analogous problem in two dimensions has been well investigated in the literature due to its connection with photonic and phononic waveguides
(see, e.g.~\cite{AS, BHPW, BHPW12, KuchWav1}). 
In the case of a line defect in two dimensions the analysis involves studying the band functions arising from the Floquet-Bloch decomposition as functions of one complex variable in which they are analytic. This was of great help in \cite{BHPW12}. In the three dimensional situation this analyticity is no longer at hand. This poses new mathematical challenges which are addressed here.

We remark that some similar spectral results for the Schr\"odinger equation have been obtained by a different method in \cite{PLAJ,Prodan}.
Moreover, perturbations of periodic structures  for the  3D Maxwell equations are studied in \cite{KuchWav2, MiaoMa, MiaoMa2}.
An up-to-date account of the theory of photonic crystals is given in \cite{Joann}.

\section{Mathematical setting}

\subsection{Definition of the operators}
We consider the spectral problem for the weighted Laplacian in three-dimensional space. 
Let $\eps_0\in L^\infty(\R^3)$ be periodic in all three directions and
 positively bounded away from zero. Without loss of generality, we
take the unit cube $[0, 1]^3$ to be the  cell of periodicity. In the weighted space $L^2_{\eps_0}(\R^3)$, we
introduce the self-adjoint operator $L_0$, formally given by the expression
$$
L_0 =-\frac{1}{\eps_0(x,y,z)}\Delta.
$$
$L_0$ is rigorously defined as the unique unbounded, self-adjoint operator on $L^2_{\eps_0}(\R^3)$ associated with the bilinear form 
\beq\label{defb}
b(u, v) = \int_{\R^3} \nabla u(\bx)  \overline{\nabla v(\bx)}~d\bx, \quad (u, v\in H^1(\R^3)),
\eeq
via the standard representation theorem for bilinear forms (see e.g. \cite{Kato}). As a result, $D(L_0)=H^2(\R^3)$.

We introduce the perturbation in the form $\eps = \eps_0+\eps_1$, where
$\eps_1$ is supported in $W=\R\times(0,1)^2$ and is periodic with period 1 in the $x$-direction.
To ensure that the perturbation is non-trivial,
we assume that $\eps_1\geq 0$ and is bounded away from $0$ on some ball $D\subset (0,1)^3$.

\begin{remark} Similar results to the ones we prove in this paper hold under the assumption $\eps_1\leq 0$ and is bounded away from $0$ on some ball $D$ provided 
$$\operatorname{essinf}_{\R^3}(\eps_0+\eps_1)>0.$$
\end{remark}

Now, the operator
$$
L =-\frac{1}{\eps(x,y,z)}\Delta
$$
is defined as the unique self-adjoint operator associated to the form \eqref{defb} in
the weighted space $L^2_\eps(\R^3)$, whence  $D(L)=H^2(\R^3)$.

The analysis of the operators $L_0$ and $L$ relies on the Floquet-Bloch or Gel'fand transform (see \cite{KuchmentBook}).
The (partial) Floquet-Bloch transform in the $x$-direction is defined by
\beq\label{floqx}
(U_x f)(\bx, k) = \frac{1}{\sqrt{2 \pi}} \sum_{m\in \Z} e^{i k m} f(\bx - m{\bf e}_x),\quad   \bx \in \Omega=(0,1)\times\R^2,\;\; k\in [-\pi,\pi],
\eeq
where ${\bf e}_x$ is the unit vector in the $x$-direction.
$U_x: L^2(\R^3) \rightarrow L^2((-\pi, \pi), L^2(\Omega))$ is an isometry. The investigation
of the spectra of $L_0$ and $L$ reduces in a standard way to the investigation of the spectra of
  self-adjoint operators $L_0(k_x), L(k_x)$, $k_x\in [-\pi, \pi]$ which  are
formally given by
\beq 
L_0(k_x)u=-\frac{1}{\eps_0(x,y,z)}\Delta u \quad \hbox{and}
\quad L(k_x)u=-\frac{1}{\eps(x,y,z)}\Delta u
\eeq
acting on functions satisfying $k_x$-quasiperiodic boundary conditions in the $x$-direction, i.e.
$$
u(\bx + m {\bf e}_x) = e^{i k_x m} u(\bx), \quad \bx\in \R^3, m\in \Z.
$$
$L_0(k_x)$ and $L(k_x)$ can be defined rigorously via quadratic forms, similarly to $L_0$ and $L$ above.

From now on we will fix $k_x \in [-\pi,\pi]$.
We wish to compare the spectra of $L_0(k_x)$ and $L(k_x)$.
As $\eps_1\vert_\Omega$ is compactly supported,
the essential spectra coincide and we only need to look for those eigenvalues of $L(k_x)$
which the perturbation may have introduced in the gaps of the essential spectrum.

Note that the background problem is additionally periodic in the $y$- and $z$-directions.
Hence we can take the Floquet-Bloch transform in both of these directions to get
operators $ L_0(k_x,k_y,k_z)$ acting on $(0,1)^3$ with quasi-periodic boundary conditions in all three directions.
These operators have eigenvalues which we denote by $\lambda_s(\bf{k})$ and corresponding eigenfunctions $\psi_s(\bf{k})$ which we refer to as the Bloch functions, where ${\bf k}=(k_y,k_z)$.

For a more detailed discussion of the contents of this section, we refer to \cite{BHPW12} and references therein.

\subsection{A Birman-Schwinger-like formulation} Our arguments rest on a Birman-Schwinger-like
formulation of the eigenvalue problem, which we now introduce.

Let $(\mu_0,\mu_1)$ be a gap of the spectrum of the operator $L_0(k_x)$, where $\mu_0$ and $\mu_1$ are assumed to lie at the end of spectral bands.
Consider the eigenvalue problem for $L(k_x)$,
\beq\label{eq:pert} -\Delta u = \lambda (\eps_0 + \eps_1) u\quad\hbox{ on } \Omega= (0,1)\times \R^2   \eeq
where $\lambda\in (\mu_0,\mu_1)$.

Let $\Lambda_0$ denote the base of the   spectrum of $L_0(k_x)$. We note that $\Lambda_0\geq 0$ and that $\sigma(L(k_x))\subseteq [0,\infty)$. In the case when $\Lambda_0>0$, our analysis also includes the gap $(-\infty, \Lambda_0)$, provided we assume $\lambda>0$.

From the equation \eqref{eq:pert} we get, for eigenpairs $(u,\la)$,
$$(L_0(k_x)-\lambda)u = -\frac{1}{\eps_0} \Delta u -\lambda u = \lambda \frac{\eps_1}{\eps_0} u.$$
It then follows that $\lambda$ is an eigenvalue in the gap iff 
\beq u=\lambda \left(L_0(k_x) -\lambda\right)^{-1} \left( \frac{\eps_1}{\eps_0}u\right) \label{FP} \eeq
holds (on $\Omega$)
for some non-zero $u$ satisfying quasi-periodic boundary conditions in the $x$-direction.

Given a solution $u$ of \eqref{FP}, set $v:=\sqrt{\frac{\eps_1}{\eps_0}}u.$
Then $v$ is supported in $[0,1]^3$, as $\eps_1|_\Omega$ is, and $v$ satisfies
\beq\label{eq:v2}
v=\lambda\sqrt{\frac{\eps_1}{\eps_0}}\left(L_0(k_x)-\lambda\right)^{-1}\sqrt{\frac{\eps_1}{\eps_0}}v.
\eeq
Conversely, if $v$ satisfies \eqref{eq:v2}, then
$$u:=\lambda\left(L_0(k_x)-\lambda\right)^{-1}\sqrt{\frac{\eps_1}{\eps_0}}v$$
satisfies \eqref{FP}
and lies in $L^2_{\eps_0}(\Omega)\cap {\rm dom} (L_0(k_x))$. Hence,  $L(k_x)u=\lambda u$. It is therefore sufficient for our purposes to study \eqref{eq:v2}.

We now define the operator $A_\lambda$ on $L^2_{\eps_0}({(0,1)^3})$ by
$$
A_\lambda v=\left( \lambda\sqrt{\frac{\eps_1}{\eps_0}}\left(L_0(k_x)-\lambda\right)^{-1}\sqrt{\frac{\eps_1}{\eps_0}} v\right)\Bigg\vert_{(0,1)^3}
$$
and note that \eqref{eq:v2} has a non-trivial solution if and only if $1$ is an eigenvalue of the operator $A_\lambda$.

\section{Preliminary results}
We denote by $\langle \cdot, \cdot \rangle_{\eps_0}$ the inner-product in $L^2_{\eps_0}((0,1)^3)$.
Using the representation of the Floquet-Bloch transform $U$ in the $y$- and $z$-directions in terms of the Bloch functions and the fact that
$Ur(\cdot , {\bf k})=\frac1{2\pi} r(\cdot)$ for any function $r$ such that $r=0$ outside $(0,1)^3$, 
we  have the following resolvent formula \cite{KuchmentBook, OdehKeller, RS} for the operator
$L_0(k_x)$:
\beq
(L_0(k_x) -\la)^{-1}r=\frac{1}{(2\pi)^2} \sum_{s \in \N} \int_{-\pi}^\pi\int_{-\pi}^\pi(\lambda_s({\bf k})-\la)^{-1} \llangle r,\psi_s(\cdot,{\bf k})\rrangle_{\eps_0}\psi_s(\cdot,{\bf k}) d{\bf k}
\label{eq:res1}
\enq
for $\la$ outside the spectrum of $L_0(k_x)$ and $r\in L^2((0,1)^3)$ extended by  $0$ on $\Omega \setminus (0,1)^3$.

  \begin{lemma} Let $\la\in\R$. Then
$  A_{\la} : L^2_{\eps_0}({(0,1)^3}) \to L^2_{\eps_0}({(0,1)^3}) $ is symmetric and compact.
  \end{lemma}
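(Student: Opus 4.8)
The plan is to establish symmetry and compactness separately, both exploiting the explicit resolvent formula \eqref{eq:res1}. For symmetry, I would work directly with the bilinear form: for $v,w\in L^2_{\eps_0}((0,1)^3)$, write $\langle A_\la v, w\rangle_{\eps_0}$ using \eqref{eq:res1} with $r=\sqrt{\eps_1/\eps_0}\,v$ (extended by zero), obtaining
\[
\langle A_\la v, w\rangle_{\eps_0}=\frac{\la}{(2\pi)^2}\sum_{s\in\N}\int_{-\pi}^\pi\int_{-\pi}^\pi \frac{\llangle \sqrt{\eps_1/\eps_0}\,v,\psi_s(\cdot,\bk)\rrangle_{\eps_0}\,\overline{\llangle \sqrt{\eps_1/\eps_0}\,w,\psi_s(\cdot,\bk)\rrangle_{\eps_0}}}{\lambda_s(\bk)-\la}\,d\bk.
\]
Here I use that $\sqrt{\eps_1/\eps_0}$ is real and multiplication by it is self-adjoint on $L^2_{\eps_0}$, and that the denominators $\lambda_s(\bk)-\la$ are real for $\la\in\R$ outside $\sigma(L_0(k_x))$ (the case $\la\in\sigma(L_0(k_x))$ can be excluded or handled since then $A_\la$ need not be bounded; one restricts attention to $\la$ in a gap, as in the application). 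This expression is manifestly conjugate-symmetric in $v,w$, giving symmetry. One should check absolute convergence of the sum/integral to justify the manipulation; this follows from the resolvent bound $\|(L_0(k_x)-\la)^{-1}\|\le \dist(\la,\sigma(L_0(k_x)))^{-1}$ together with the fact that $\{\psi_s(\cdot,\bk)\}$ is a (rescaled) orthonormal system, so that $\sum_s |\llangle r,\psi_s(\cdot,\bk)\rrangle_{\eps_0}|^2$ is controlled by $\|r\|^2$.

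For compactness, the cleanest route is to factor $A_\la$ through an appropriate Sobolev space. Write $A_\la = \la\, M\, R_\la\, M$, where $M$ denotes multiplication by $\sqrt{\eps_1/\eps_0}$ followed by (or preceded by) extension/restriction between $(0,1)^3$ and $\Omega$, and $R_\la=(L_0(k_x)-\la)^{-1}$. Since $D(L_0(k_x))$ consists of $H^2$-functions (with quasi-periodic boundary conditions), $R_\la$ maps $L^2(\Omega)$ continuously into $H^2$ of a neighbourhood of $[0,1]^3$ in $\Omega$; then restriction to the bounded set $(0,1)^3$ followed by multiplication by the bounded function $\sqrt{\eps_1/\eps_0}$ (supported in $[0,1]^3$) lands in $H^2((0,1)^3)$, and the embedding $H^2((0,1)^3)\hookrightarrow L^2((0,1)^3)$ is compact by Rellich–Kondrachov. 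Composing a bounded operator with a compact one yields compactness of $A_\la$. Since $L^2_{\eps_0}$ and $L^2$ have equivalent norms ($\eps_0$ is bounded above and below), this also gives compactness on the weighted space.

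I expect the main obstacle to be the bookkeeping around the domain of $L_0(k_x)$ and its elliptic regularity on the strip $\Omega=(0,1)\times\R^2$ with quasi-periodic conditions in $x$: one must be sure that $R_\la$ genuinely gains two derivatives locally (interior elliptic regularity plus regularity up to the quasi-periodic faces, which is really no boundary at all after unfolding, so only interior regularity is needed away from those faces) so that the Rellich argument applies on the fixed bounded set $(0,1)^3$. An alternative that sidesteps this is to prove compactness directly from \eqref{eq:res1}: truncate the sum over $s$ at $N$, note each truncated operator has finite rank (its range is spanned by the finitely many functions $\psi_s(\cdot,\bk)$ integrated against $\bk$ — more precisely approximate the $\bk$-integral as well to get genuine finite rank), and estimate the tail $\sum_{s>N}$ in operator norm using $\|(L_0(k_x)-\la)^{-1}(I-P_N)\|\to 0$ as $N\to\infty$, where $P_N$ is the spectral projection; this exhibits $A_\la$ as a norm-limit of finite-rank operators. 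Either way the statement follows; I would present the factorization argument for compactness and the bilinear-form computation for symmetry, as these are the shortest.
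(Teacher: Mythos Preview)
Your approach is essentially correct and close in spirit to the paper's, with two small points worth noting.

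For symmetry, the paper takes a more direct route: it simply uses that $(L_0(k_x)-\la)^{-1}$ is self-adjoint on $L^2_{\eps_0}(\Omega)$ for real $\la$, and moves the multiplication by $\sqrt{\eps_1/\eps_0}$ from one slot of the inner product to the other. Your expansion via the Bloch resolvent formula \eqref{eq:res1} reaches the same conclusion but is more laborious, and the absolute-convergence check you flag is unnecessary once one works at the operator level.

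For compactness, your factorisation argument is the same idea as the paper's (elliptic regularity plus Rellich on the bounded cube), but there is a small ordering slip: multiplication by the merely bounded function $\sqrt{\eps_1/\eps_0}$ does \emph{not} in general preserve $H^2$ (or $H^1$), so you cannot say the composition ``lands in $H^2((0,1)^3)$'' before applying Rellich. The correct order, which the paper uses, is: $(L_0(k_x)-\la)^{-1}$ followed by restriction to $(0,1)^3$ is bounded $L^2\to H^1((0,1)^3)$, the embedding $H^1((0,1)^3)\hookrightarrow L^2((0,1)^3)$ is compact, and \emph{then} multiplication by $\sqrt{\eps_1/\eps_0}$ is bounded on $L^2$. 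Thus $A_\la$ is bounded composed with compact. (The paper uses only $H^1$-regularity, which is all that is needed; your $H^2$ is fine too but not required.) Your alternative finite-rank approximation via truncating in $s$ and discretising in $\bk$ would also work, but is longer than necessary.
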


\begin{proof} Let $u,v\in L^2_{\eps_0}({(0,1)^3})$. Then
  \begin{eqnarray*}
  \llangle \eps_0 A_\la u,v\rrangle_{L^2({(0,1)^3})} &=& \llangle \eps_0 \la  \left (L_0(k_x) -\la\right )^{-1} \sqrt{\frac{\eps_1}{\eps_0}}u, \sqrt{\frac{\eps_1}{\eps_0}} v\rrangle_{L^2(\Omega)}
   \\
  &=&\llangle \eps_0\sqrt{\frac{\eps_1}{\eps_0}} u,\la  \left ( L_0(k_x) -\la\right )^{-1} \sqrt{\frac{\eps_1}{\eps_0}} v \rrangle_{L^2(\Omega)} \\
  &=&\llangle \eps_0u,\la\sqrt{\frac{\eps_1}{\eps_0}} \left ( \left ( L_0(k_x)-\la\right )^{-1} \sqrt{\frac{\eps_1}{\eps_0}} v\right ) \Bigg \vert_{(0,1)^3} \rrangle_{L^2({(0,1)^3})}\\
   \ &=&\ \llangle \eps_0 u, A_{\la} v\rrangle_{L^2((0,1)^3)},
  \end{eqnarray*}
  so the operator is symmetric. Moreover, by standard estimates (see e.g. \cite[p.71]{Agmon}) for the elliptic operator on the slab $\Omega$,
  \bea
  \norm{ \left ( L_0(k_x) -\la \right )^{-1}\sqrt{\frac{\eps_1}{\eps_0}} u}_{H^1({(0,1)^3})}
  &\leq   \norm{ \left (L_0(k_x) -\la \right )^{-1} \sqrt{\frac{\eps_1}{\eps_0}} u }_{H^1(\Omega)}\\
  &\leq C_{\la} \norm{  u }_{L^2(\Omega)}=C_{\la}\norm{ u}_{L^2((0,1)^3)}.
  \eea
  Thus $A_\la $ is the composition of a compact map with the continuous map of multiplication by the function $\sqrt{\frac{\eps_1}{\eps_0}}$ and
  is therefore compact as a map from $L^2({(0,1)^3}) \to L^2({(0,1)^3})$. Multiplication by the bounded and boundedly invertible weight $\eps_0$ does not change this.\end{proof}


We now investigate the dependence of the positive eigenvalues  of $A_\la$ on $\la$. 
We have the following standard variational characterisations of the $n$-th highest   positive eigenvalue (if it exists):
\beq\kappa_n(\la)=\min_{\codim L = n-1}\ \sup_{u\in L} \frac{ \llangle A_\la u,u\rrangle_{\eps_0}}{\llangle u,u\rrangle_{\eps_0}}
= \max_{\dim L =n} \ \min_{u\in L} \frac{ \llangle A_\la u,u\rrangle_{\eps_0}}{\llangle u,u\rrangle_{\eps_0}}.\label{varchar}\eeq

%

\begin{lemma}\label{lem:cont} For $\la$ in the spectral gap $(\mu_0,\mu_1)$ we have that
$\la\mapsto \kappa_n(\la)$ is continuous and increasing.
\end{lemma}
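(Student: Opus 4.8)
The plan is to establish monotonicity and continuity separately, both via the variational characterisation \eqref{varchar}, after first recording the crucial $\lambda$-monotonicity of the quadratic form $\llangle A_\lambda u, u\rrangle_{\eps_0}$. Using the resolvent formula \eqref{eq:res1} applied to $r=\sqrt{\eps_1/\eps_0}\,u$ extended by zero, one computes that for $u\in L^2_{\eps_0}((0,1)^3)$,
\beq\label{eq:formula}
\llangle A_\lambda u, u\rrangle_{\eps_0} = \frac{\lambda}{(2\pi)^2}\sum_{s\in\N}\int_{-\pi}^\pi\int_{-\pi}^\pi \frac{\bigl|\llangle \sqrt{\tfrac{\eps_1}{\eps_0}}\,u,\psi_s(\cdot,{\bf k})\rrangle_{\eps_0}\bigr|^2}{\lambda_s({\bf k})-\lambda}\,d{\bf k}.
\eeq
Since $\lambda$ lies in the gap $(\mu_0,\mu_1)$, each $\lambda_s({\bf k})$ is either $\le\mu_0$ or $\ge\mu_1$, so each term is smooth in $\lambda$ on the gap. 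The key observation is that $\lambda\mapsto \lambda/(\lambda_s({\bf k})-\lambda)$ has derivative $\lambda_s({\bf k})/(\lambda_s({\bf k})-\lambda)^2$. This is \emph{not} sign-definite termwise, which is the main subtlety; but summing, $\tfrac{d}{d\lambda}\llangle A_\lambda u,u\rrangle_{\eps_0}$ equals $\tfrac{1}{(2\pi)^2}\sum_s\int\!\!\int \lambda_s({\bf k})|\cdots|^2/(\lambda_s({\bf k})-\lambda)^2\,d{\bf k}$, which is $\ge 0$ because $\lambda_s({\bf k})=\sigma(L_0(k_x,{\bf k}))\subseteq[0,\infty)$. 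Thus for each fixed $u\ne 0$, the Rayleigh quotient $\lambda\mapsto \llangle A_\lambda u,u\rrangle_{\eps_0}/\llangle u,u\rrangle_{\eps_0}$ is nondecreasing on $(\mu_0,\mu_1)$.

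With this pointwise monotonicity in hand, I would deduce monotonicity of $\kappa_n$ directly: for $\lambda<\lambda'$ in the gap, in the max-min form $\kappa_n(\lambda)=\max_{\dim L=n}\min_{u\in L}\llangle A_\lambda u,u\rrangle_{\eps_0}/\llangle u,u\rrangle_{\eps_0}$, replacing $A_\lambda$ by $A_{\lambda'}$ increases the Rayleigh quotient for every $u$, hence increases the inner minimum over each fixed $L$, hence increases the outer maximum; so $\kappa_n(\lambda)\le\kappa_n(\lambda')$. (Care must be taken that ``increasing'' in the statement is meant in the non-strict sense, or else one must argue strict increase; since the eigenfunction $\psi_s$ cannot be orthogonal to $\sqrt{\eps_1/\eps_0}\,u$ for \emph{all} $s,{\bf k}$ when $u\ne0$ — by completeness of the Bloch functions and the positivity of $\eps_1$ on the ball $D$ — and since at least one band with $\lambda_s>0$ contributes, strict monotonicity also follows when $\kappa_n(\lambda)>0$.) The same argument also shows that a gap-value $\lambda$ with $\lambda=0$, if it lies in the gap, is handled by the additional hypothesis $\lambda>0$ in the $(-\infty,\Lambda_0)$ case, so $\lambda_s({\bf k})-\lambda$ never vanishes.

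For continuity, the cleanest route is to show $\lambda\mapsto A_\lambda$ is continuous in operator norm on $(\mu_0,\mu_1)$, since eigenvalues of compact self-adjoint operators depend continuously (indeed $1$-Lipschitzly, by \eqref{varchar} and $|\kappa_n(\lambda)-\kappa_n(\lambda')|\le\|A_\lambda-A_{\lambda'}\|$) on the operator. Norm continuity of $A_\lambda$ follows from the resolvent identity $(L_0(k_x)-\lambda)^{-1}-(L_0(k_x)-\lambda')^{-1}=(\lambda-\lambda')(L_0(k_x)-\lambda)^{-1}(L_0(k_x)-\lambda')^{-1}$ together with the boundedness of $(L_0(k_x)-\lambda)^{-1}:L^2(\Omega)\to L^2(\Omega)$ uniformly for $\lambda$ in a compact subgap and the boundedness of multiplication by $\sqrt{\eps_1/\eps_0}$; conjugating and restricting to $(0,1)^3$ and inserting the bounded weight $\eps_0$ preserves this. \textbf{Main obstacle.} The real work is the monotonicity argument: the termwise derivative in \eqref{eq:formula} genuinely changes sign across the two ``sides'' of the gap, so one must not try to argue term by term but rather use the global fact $\sigma(L_0)\subseteq[0,\infty)$ to get a nonnegative sum; making the differentiation under the summation/integration rigorous requires a dominated-convergence argument exploiting that $\lambda_s({\bf k})-\lambda$ stays bounded away from $0$ (distance to $\mu_0,\mu_1$) uniformly over ${\bf k}$, together with the summability $\sum_s\int\!\!\int|\llangle\sqrt{\eps_1/\eps_0}u,\psi_s\rrangle_{\eps_0}|^2\,d{\bf k}=\|\sqrt{\eps_1/\eps_0}u\|^2_{L^2_{\eps_0}((0,1)^3)}<\infty$ from Parseval.
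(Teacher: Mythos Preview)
Your proof is correct and follows essentially the same approach as the paper: norm-continuity of $\lambda\mapsto A_\lambda$ (which the paper simply asserts, while you derive it from the resolvent identity) gives continuity of $\kappa_n$ via the variational characterisation, and monotonicity comes from the nonnegativity of $\lambda_s({\bf k})$ together with the fact that $\lambda_s({\bf k})-\lambda$ has constant sign across the gap. The only minor difference is that the paper compares two values directly via the algebraic identity $\dfrac{\tilde\lambda}{\lambda_s-\tilde\lambda}-\dfrac{\lambda}{\lambda_s-\lambda}=\dfrac{(\tilde\lambda-\lambda)\lambda_s}{(\lambda_s-\tilde\lambda)(\lambda_s-\lambda)}\ge 0$, which sidesteps your differentiation-under-the-sum justification.
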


\begin{proof}
As $\la\mapsto A_\la$ is norm-continuous, we have that  for $\la \in (\mu_0, \mu_1)$  and   $\widetilde{\eps}>0$, there exists $\delta>0$ such that $|\la-\widetilde{\la}|<\delta$ implies,
for every $u$, 
$$\left|\llangle A_\la u, u \rrangle_{\eps_0}- \llangle A_{\widetilde{\la}} u, u \rrangle_{\eps_0}\right| \leq \widetilde{\eps} \norm{u}^2_{\eps_0}.$$ 
Thus
$$\frac{\llangle A_{\widetilde{\la}} u, u \rrangle_{\eps_0}}{\norm{u}^2_{\eps_0}}  \leq  \frac{\llangle A_{\la} u, u \rrangle_{\eps_0}}{\norm{u}^2_{\eps_0}}+\widetilde{\eps},$$
and therefore $\kappa_n(\widetilde \la) \leq \kappa_n(\la)+\widetilde{\eps}$  by \eqref{varchar}.
Similarly, we obtain the reverse inequality. Together these imply continuity of $\la\mapsto \kappa_n(\la)$.

  Let $\mu_0<\la<\widetilde \la<\mu_1$.
  Then
   $$\frac{\widetilde \la}{\la_s({\bf k})-\widetilde \la}-\frac\la{\la_s({\bf k})-\la}=\frac{ (\widetilde \la-\la)\la_s({\bf k})}{(\la_s({\bf k})-\widetilde \la)(\la_s({\bf k})-\la)} \geq 0
   $$
   since $ (\la_s({\bf k})-\widetilde \la) (\la_s({\bf k})-\la)>0$ and $\la_s({\bf k})\geq 0$ for all $s$ and all ${\bf k}$.   Thus, by \eqref{eq:res1}  $\la\mapsto \kappa_{n}(\la)$ is monotonically increasing.
\end{proof}

We now introduce some notation and a basic assumption which will remain valid for the rest of the paper. Let
\bea
&\Sigma=\{ (s,{\bf k}) : \la_{s}({\bf k})=\mu_1\},\\
&S_{\bf k}=\{s:(s,{\bf k})\in \Sigma\}, \\
&S=\{ s :\text{   there\;is\;a\;} {\bf k}\;  \text{with \;} (s,{\bf k})\in \Sigma \} = \bigcup_{\bf k} S_{\bf k},
\eea
where  $\mu_1$ is the upper end of the spectral gap under investigation. Lemma \ref{lem:5} below shows that under our following assumption the set $\Sigma$ is finite and we will denote its elements in the following by $(s_j, {\bf k}^j), j=1,...,n.$

\begin{assumption}\label{as1}
We assume that for all $(\hat s, \hat k) \in \Sigma$ there are $\alpha_{(\hat s, \hat k)}>0$   and $\delta_{(\hat s, \hat k)}>0$ such that  for all    ${\bf k }\in [-\pi,\pi]^2 $     satisfying  $|{\bf  k} -{\bf \hat k}| \leq \delta_{(\hat s,\hat {\bf k})}$ we have  $\la_{\hat s} ({\bf k})\geq \mu_1+\alpha_{(\hat s, \hat k)} | {\bf k}-{\bf \hat k}|^2$.
\end{assumption}

\begin{lemma}
For ${\bf k}\in [-\pi,\pi]^2,\;\la_s( {\bf k})\to \infty$ as $s\to\infty$.
\end{lemma}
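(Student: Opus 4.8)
The plan is to prove the eigenvalue asymptotics $\lambda_s(\mathbf k)\to\infty$ as $s\to\infty$ by exhibiting a lower bound for $L_0(k_x,k_y,k_z)$ (the fully-quasiperiodic operator on the cube $(0,1)^3$) that is uniform in the Floquet parameters and then invoking compactness of the resolvent. First I would recall that $L_0(k_x,\mathbf k)$ is the self-adjoint operator on $L^2_{\eps_0}((0,1)^3)$ associated with the form $b(u,v)=\int_{(0,1)^3}\nabla u\cdot\overline{\nabla v}\,d\bx$ on the form domain $H^1_{k_x,\mathbf k}$ of functions satisfying the relevant quasi-periodic boundary conditions. Since $\eps_0$ is bounded above and below, say $0<c_0\le\eps_0\le C_0$, the form inequality
\[
b(u,u)=\int_{(0,1)^3}|\nabla u|^2\,d\bx \;\ge\; c_1\,\|u\|_{H^1((0,1)^3)}^2 - c_2\,\|u\|_{L^2((0,1)^3)}^2
\]
holds on the quasi-periodic form domain (the Poincaré-type term; quasi-periodic boundary conditions cause no difficulty since one can conjugate by the unimodular factor $e^{i(k_x x+k_yy+k_zz)}$ to reduce to genuinely periodic functions, for which this is standard), with constants $c_1,c_2$ independent of $(k_x,\mathbf k)$. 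Combined with $\|u\|_{L^2_{\eps_0}}^2 \asymp \|u\|_{L^2}^2$, this shows $L_0(k_x,\mathbf k)$ has form domain continuously embedded in $H^1((0,1)^3)$, hence compactly embedded in $L^2((0,1)^3)$ by Rellich's theorem, so the resolvent of $L_0(k_x,\mathbf k)$ is compact and its spectrum consists of eigenvalues $\lambda_1(\mathbf k)\le\lambda_2(\mathbf k)\le\cdots\to\infty$.

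The main point to extract is the divergence rate, or at least divergence, uniformly enough to conclude. I would use the min-max characterisation: for each fixed $(k_x,\mathbf k)$,
\[
\lambda_s(\mathbf k)=\min_{\substack{V\subseteq H^1_{k_x,\mathbf k}\\ \dim V=s}}\ \max_{0\ne u\in V}\frac{b(u,u)}{\|u\|_{L^2_{\eps_0}}^2}.
\]
Using the lower form bound above, $\dfrac{b(u,u)}{\|u\|_{L^2_{\eps_0}}^2}\ge \dfrac{c_1}{C_0}\dfrac{\|u\|_{H^1}^2}{\|u\|_{L^2}^2}-\dfrac{c_2}{c_0}$, so $\lambda_s(\mathbf k)\ge \dfrac{c_1}{C_0}\,\nu_s-\dfrac{c_2}{c_0}$, where $\nu_s$ is the $s$-th min-max value of the Rayleigh quotient $\|u\|_{H^1}^2/\|u\|_{L^2}^2$ over $s$-dimensional subspaces of $H^1_{k_x,\mathbf k}$; equivalently $\nu_s=1+\mu_s$ with $\mu_s$ the $s$-th eigenvalue of the (shifted) quasi-periodic Laplacian on the cube. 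Since $\mu_s\to\infty$ as $s\to\infty$ — explicitly, these eigenvalues are $4\pi^2\sum_{i}(n_i+\theta_i)^2$ with $\theta_i$ determined by the quasi-momenta and $n_i\in\Z$, so $\mu_s\to\infty$ and in fact $\mu_s\ge c\,s^{2/3}$ uniformly in the $\theta_i$ by a lattice-point count — we conclude $\lambda_s(\mathbf k)\to\infty$ as $s\to\infty$ for every $\mathbf k$ (and with a lower bound uniform in $\mathbf k$, which is all that is ever needed).

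The statement as written only asserts pointwise-in-$\mathbf k$ divergence, so strictly the compact-resolvent argument in the first paragraph already suffices and the uniform estimate is a bonus. I do not expect any serious obstacle here; the only mild care needed is the treatment of quasi-periodic boundary conditions — one must check that the form $b$ is closed and bounded below on the quasi-periodic form domain and that Rellich compactness applies there, which is handled cleanly by the substitution $u\mapsto e^{-i(k_x x+k_y y+k_z z)}u$ mapping $H^1_{k_x,\mathbf k}$ isomorphically onto the space of truly periodic $H^1$ functions on the cube (equivalently $H^1$ of the torus), on which everything is classical. I would therefore organise the proof as: (i) reduce to periodic boundary conditions via the unimodular substitution; (ii) note the resulting form has form domain $H^1$ of the torus, compactly embedded in $L^2$; (iii) conclude discreteness and divergence of eigenvalues from standard spectral theory of operators with compact resolvent, optionally recording the uniform lower bound $\lambda_s(\mathbf k)\ge c s^{2/3}-C$ from the explicit Laplacian spectrum.
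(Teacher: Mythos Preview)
Your proof is correct, but the paper takes a somewhat different and shorter route. Both arguments rest on the min--max characterisation, but instead of stripping off the weight $\eps_0$ and comparing to the explicit eigenvalues of the \emph{quasi-periodic} Laplacian (which still depend on $\mathbf{k}$, so that you then have to argue uniformity via the explicit formula $4\pi^2\sum_i(n_i+\theta_i)^2$ and a lattice-point count), the paper compares directly to the \emph{Neumann} problem for $-\eps_0^{-1}\Delta$ on the cube: the quasi-periodic form domain $H^1_{k_x,\mathbf{k}}$ is a subspace of the Neumann form domain $H^1((0,1)^3)$, so the $s$-th min--max value over the larger space is no bigger, giving $\lambda_s(\mathbf{k})\ge\lambda_s^{\mathrm{Neu}}$ with the right-hand side already independent of $\mathbf{k}$ and tending to infinity. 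Your approach buys the quantitative bound $\lambda_s(\mathbf{k})\ge c\,s^{2/3}-C$, which is more than the paper ever uses; the paper's approach buys a one-line argument with no conjugation trick and no separate uniformity discussion.
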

\begin{proof}
This is an application of the min max characterisation (\ref{varchar}) of eigenvalues  of $L_0(k_x,k_y,k_z)$  to show that  $\min_{ {\bf k} \in [-\pi,\pi]^2} \la_s( {\bf k})$ is bounded below by the $s^{th}$-Neumann eigenvalue of $-\eps_0^{-1}\Delta$ on the cube  and hence tends to infinity as $s\to\infty$.
\end{proof}
\begin{cor} \label{cor:1}
There is an $s_0\in \N$ such that for all $s\geq s_0$ and for all ${\bf k} \in [-\pi,\pi]^2$ we have $\la_s({\bf k})\geq \mu_1+1$.
\end{cor}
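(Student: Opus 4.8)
The plan is to read off the required uniformity in ${\bf k}$ from the \emph{proof} of the preceding lemma rather than merely from its statement. The first step is, for each fixed ${\bf k}\in[-\pi,\pi]^2$, to compare the operator $L_0(k_x,k_y,k_z)$ on the cube $(0,1)^3$ with the Neumann realisation of $-\eps_0^{-1}\Delta$ on the same cube: both have compact resolvent, for both the relevant Rayleigh quotient is $\int_{(0,1)^3}|\nabla u|^2\,d\bx\,\big/\int_{(0,1)^3}\eps_0|u|^2\,d\bx$, and the quasi-periodic form domain (the $u\in H^1((0,1)^3)$ whose traces on opposite faces agree up to the phases $e^{ik_x},e^{ik_y},e^{ik_z}$) is a closed subspace of the Neumann form domain $H^1((0,1)^3)$. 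Feeding this into the max--min characterisation of eigenvalues gives, term by term, $\la_s({\bf k})\ge\nu_s$ for every $s\in\N$ and every ${\bf k}\in[-\pi,\pi]^2$, where $\nu_1\le\nu_2\le\cdots$ are the Neumann eigenvalues of $-\eps_0^{-1}\Delta$ on $(0,1)^3$, listed with multiplicity.

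Next I would combine this with the two facts about the $\nu_s$ already available: that $\nu_s\to\infty$ as $s\to\infty$ (shown in the proof of the preceding lemma) and that $s\mapsto\nu_s$ is non-decreasing. Choosing $s_0\in\N$ with $\nu_{s_0}\ge\mu_1+1$, one then has, for all $s\ge s_0$ and all ${\bf k}\in[-\pi,\pi]^2$,
\[
\la_s({\bf k})\ \ge\ \nu_s\ \ge\ \nu_{s_0}\ \ge\ \mu_1+1,
\]
which is the assertion. There is essentially no genuine obstacle here; the only point that deserves a line of care is that the inclusion of the quasi-periodic form domain into the Neumann form domain, and hence the eigenvalue comparison it produces, holds uniformly in ${\bf k}$ — which is clear because all these problems live on the same cube with the same quadratic form, with only the boundary conditions (and therefore the form domains) depending on ${\bf k}$.
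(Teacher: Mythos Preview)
Your proposal is correct and is essentially the same argument the paper uses: the proof of the preceding lemma already shows that $\min_{{\bf k}\in[-\pi,\pi]^2}\la_s({\bf k})$ is bounded below by the $s$-th Neumann eigenvalue of $-\eps_0^{-1}\Delta$ on the cube (via the form-domain inclusion and min--max), and the corollary follows immediately by choosing $s_0$ with $\nu_{s_0}\ge\mu_1+1$. You have simply made explicit the uniformity in ${\bf k}$ that the paper leaves implicit.
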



\begin{lemma}\label{lem:5}
The set $\Sigma$ is both finite and isolated in the sense that   there is a $\delta >0$ such that for all $s\not \in S$  we have   $| \la_s({\bf k})-\mu_1 |\geq \delta$  for all  ${\bf k}\in [-\pi,\pi]^2$.
\end{lemma}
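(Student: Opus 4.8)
The plan is to prove Lemma \ref{lem:5} by combining Corollary \ref{cor:1} (which controls high band indices) with Assumption \ref{as1} and a compactness argument in the ${\bf k}$-variable (which handles the low band indices).

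First I would observe that, by Corollary \ref{cor:1}, every $s$ with $s \ge s_0$ satisfies $\la_s({\bf k}) \ge \mu_1 + 1$ for all ${\bf k}$; in particular no such $s$ lies in $S$, and for these $s$ the separation estimate $|\la_s({\bf k}) - \mu_1| \ge 1$ holds trivially. So it remains to deal with the finitely many indices $s < s_0$. For each such fixed $s$, the function ${\bf k} \mapsto \la_s({\bf k})$ is continuous on the compact set $[-\pi,\pi]^2$ (continuity of Floquet eigenvalues in the quasimomentum is standard). If $s \notin S$, then $\la_s({\bf k}) \ne \mu_1$ for every ${\bf k}$, so by continuity and compactness $|\la_s({\bf k}) - \mu_1|$ attains a positive minimum $\delta_s > 0$. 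Taking $\delta = \min\{1, \min_{s < s_0,\ s\notin S}\delta_s\}$ gives the isolation statement; the minimum is over a finite set so $\delta > 0$.

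For finiteness of $\Sigma$: again $\Sigma$ can only contain pairs $(s,{\bf k})$ with $s < s_0$ by Corollary \ref{cor:1}, so $S \subseteq \{0,1,\dots,s_0-1\}$ is finite. It then suffices to show that for each fixed $s \in S$ the slice $\{{\bf k} : \la_s({\bf k}) = \mu_1\}$ is finite. Here Assumption \ref{as1} enters: at every point ${\bf \hat k}$ of this slice there is a neighbourhood of radius $\delta_{(s,{\bf\hat k})}$ on which $\la_s({\bf k}) \ge \mu_1 + \alpha_{(s,{\bf\hat k})}|{\bf k} - {\bf\hat k}|^2 > \mu_1$ for ${\bf k} \ne {\bf\hat k}$, so ${\bf\hat k}$ is an isolated point of the slice. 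A subset of the compact set $[-\pi,\pi]^2$ all of whose points are isolated must be finite (it is discrete and closed — closed because $\la_s$ is continuous — hence compact and discrete, hence finite). Summing over the finitely many $s \in S$ shows $\Sigma$ is finite.

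The main obstacle, such as it is, is purely bookkeeping: one must be careful that the radii $\delta_{(\hat s,{\bf\hat k})}$ and constants $\alpha_{(\hat s,{\bf\hat k})}$ from Assumption \ref{as1} are a priori allowed to depend on the point ${\bf\hat k}$, so one cannot directly extract a uniform lower bound on the spectral gap from the assumption alone; the uniformity is recovered only after finiteness of $\Sigma$ has been established (and for the non-resonant indices it comes from Corollary \ref{cor:1} and plain compactness, not from Assumption \ref{as1} at all). There is also the mild technical point that one is implicitly using continuity of ${\bf k}\mapsto\la_s({\bf k})$, which follows from analytic/Kato-Rellich perturbation theory for the family $L_0(k_x,k_y,k_z)$ and is part of the standard Floquet-Bloch framework referenced earlier; I would simply cite this.
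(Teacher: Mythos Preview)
Your proposal is correct and follows essentially the same approach as the paper: both use Corollary~\ref{cor:1} to reduce to finitely many band indices, continuity and compactness of $[-\pi,\pi]^2$ for the isolation estimate when $s\notin S$, and Assumption~\ref{as1} to show that each slice $\{{\bf k}:\la_s({\bf k})=\mu_1\}$ consists of isolated points. The only cosmetic difference is that the paper argues finiteness by contradiction via an injective sequence $(s_n,{\bf k}_n)$ and subsequence extraction, whereas you phrase the same idea directly as ``closed discrete subset of a compact set is finite''; these are equivalent.
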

\begin{proof}
The proof is presented in two parts.
We first show $\Sigma$ is finite.  To do this we shall assume the contrary.  This implies there is an  injective sequence $(s_n,{\bf k}_n)\in \N \times [-\pi,\pi]^2 $ such that $\la_{s_n}({\bf k}_n)=\mu_1$.  In particular  this means that $s_n\leq s_0$
(with $s_0$ from Corollary \ref{cor:1}),
implying that  there is $\hat s \in \{1,\ldots, s_0\}$  and a subsequence $(s_{n_j})$ such that $(s_{n_j})=\hat s$  for all  $j$. Thus, the subsequence $({ \bf k}_{n_j})$ must be injective.

 Since ${ \bf k}_{n_j} \in [-\pi,\pi]^2$, we have that  ${ \bf k}_{n_{j_\nu}} \to \hat  { \bf k} $ for some subsequence ${ \bf k}_{n_{j_\nu}}$ and some  $\hat { \bf k}\in [-\pi,\pi]^2$, and as  $\la_{\hat s}$ is continuous this implies that $\la_{\hat s}({\bf k}_{n_{j_\nu}})\to \la_{\hat s} (\hat {\bf k})$.
 Also $\la_{{\hat s}}({ \bf k}_{n_{j_\nu}})=\la_{s_{n_{j_\nu}}}({ \bf k}_{n_{j_\nu}})=\mu_1$, so we get   $\la_{{\hat s}}(\hat {\bf k})=\mu_1$, i.e. $(\hat s,\hat {\bf k})\in \Sigma$.  Using Assumption \ref{as1}, this shows
 $$
 \mu_1=\la_{\hat s} ( {\bf k}_{n_{j_\nu}})\geq \mu_1 + \alpha_{(\hat s,\hat {\bf k})} | {\bf k}_{n_{j_\nu}}-\hat {\bf k}|^2
$$
for sufficiently large $\nu$, which provides the contradiction since $({ \bf k}_{ n_{j_\nu}})$ is injective.

We now prove  that $\Sigma$ is isolated.
For each $s\not \in S$ such that $s\leq s_0$, we have   that $|\la_s({\bf k})-\mu_1 | >0$ for all ${\bf k}\in [-\pi,\pi]^2$.
 Since $\la_s$ is continuous, this implies the existence of some $\delta_s>0$ such that   $|\la_s({\bf k})-\mu_1|\geq \delta_s>0$.
Now
$$\delta:= \min  \{\delta_s:s\not\in S,\; s\leq s_0  \}>0.$$
So
$ | \la_s({\bf k})-\mu_1|\geq \delta$ for all $s\not  \in S,\;s\leq s_0$, and  ${\bf k}\in [-\pi,\pi]^2$.
Moreover, for all $s \geq s_0, \; |\la_s({\bf k})-\mu_1|\geq 1$  for all ${\bf k} \in [-\pi,\pi]^2$, by Corollary
\ref{cor:1}.
Together, for each $s\not \in S$, we have  $ | \la_s({\bf k}) -\mu_1 | \geq \min \{\delta,1\}$.
\end{proof}

We remark that in the case of planar wave-guides, where $ k\in [-\pi, \pi]$,   the  finiteness of $\Sigma$ follows also from the analyticity of the band functions
$\la_s(k)$, and the Thomas argument excluding constant band functions (see, e.g.~\cite{BHPW12}).

We next prove a result on the linear independence of the Bloch functions:

\begin{prop}\label{lindep}   The set $\{\psi_{s_j}(\cdot,{\bf k}^j): j=1,...,n\}$ is linearly independent over the ball $D $, the set  where $\eps_1$ is bounded away from $0$.
\end{prop}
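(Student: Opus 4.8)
The plan is to argue by contradiction. Suppose the functions $\psi_{s_j}(\cdot,{\bf k}^j)$, $j=1,\dots,n$, are linearly dependent when restricted to $D$; after relabelling, write $\psi_{s_1}(\cdot,{\bf k}^1) = \sum_{j=2}^n c_j \psi_{s_j}(\cdot,{\bf k}^j)$ on $D$ for suitable constants $c_j\in\C$. The key point is that each Bloch function $\psi_{s}(\cdot,{\bf k})$ is a solution of the \emph{elliptic} equation $-\Delta\psi_s(\cdot,{\bf k}) = \lambda_s({\bf k})\,\eps_0\,\psi_s(\cdot,{\bf k})$ on $(0,1)^3$ (with quasi-periodic boundary conditions), and in particular solves a second-order elliptic equation with bounded coefficients on the interior domain containing $\overline D$. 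Hence the strong unique continuation property applies: a Bloch function cannot vanish on an open set without vanishing identically, and more to the point, we can propagate the linear relation off $D$.

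The first step is to upgrade the relation from $D$ to all of $(0,1)^3$. Set $\Phi := \psi_{s_1}(\cdot,{\bf k}^1) - \sum_{j=2}^n c_j \psi_{s_j}(\cdot,{\bf k}^j)$. This $\Phi$ is a linear combination of eigenfunctions of $L_0(k_x,k_y,k_z)$ at possibly different quasimomenta, so it is not itself an eigenfunction in general; however, each summand is real-analytic in the interior (elliptic regularity for $-\Delta$ with $L^\infty$ coefficient $\eps_0$ — more precisely, each $\psi_s(\cdot,{\bf k})\in H^2_{loc}$ and is a weak solution of a second-order elliptic equation, so Aronszajn's unique continuation theorem is available). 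Since $\Phi$ vanishes on the open ball $D$, by unique continuation $\Phi\equiv 0$ on the connected set $(0,1)^3$. The second step exploits the quasi-periodic boundary conditions: $\psi_{s_j}(\cdot,{\bf k}^j)$ satisfies $k_y^j$-quasiperiodicity in $y$ and $k_z^j$-quasiperiodicity in $z$, so evaluating the identity $\Phi\equiv 0$ and its consequences under translation by integer vectors in the $y$ and $z$ directions forces, for each distinct quasimomentum ${\bf k}^j$ appearing with nonzero coefficient, a separate vanishing — unless several of the $(s_j,{\bf k}^j)$ share the same ${\bf k}$. In the latter case, for a fixed ${\bf k}$ the functions $\{\psi_s(\cdot,{\bf k}): s \text{ with } (s,{\bf k})\in\Sigma\}$ are eigenfunctions of the \emph{single} self-adjoint operator $L_0(k_x,k_y,k_z)$ all belonging to the same eigenvalue $\mu_1$; one then invokes that these can be chosen orthonormal (they span the $\mu_1$-eigenspace), so a nontrivial linear combination of them cannot vanish on the open set $D$ either, again by unique continuation applied to the resulting nonzero eigenfunction. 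Combining these, all coefficients must be zero, contradicting that we had a genuine dependence relation.

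The main obstacle is the second step: handling the case where the $(s_j,{\bf k}^j)\in\Sigma$ involve several \emph{distinct} values of ${\bf k}$. One cannot directly use orthogonality of eigenfunctions across different fibres, and $\Phi$ is not an eigenfunction. The cleanest route is: group the terms by common quasimomentum, $\Phi = \sum_{{\bf k}} g_{\bf k}$ where $g_{\bf k} := \sum_{j:\,{\bf k}^j={\bf k}} c_j\psi_{s_j}(\cdot,{\bf k})$ (with $c_1 = -1$ for the group containing the index $1$), each $g_{\bf k}$ being $k_y$-quasiperiodic in $y$ and $k_z$-quasiperiodic in $z$. Translating $\Phi\equiv 0$ by $m{\bf e}_y$ ($m\in\Z$) gives $\sum_{\bf k} e^{ik_y m} g_{\bf k} \equiv 0$; since the $g_{\bf k}$ are extended quasiperiodically to all of $\R^3$ this holds everywhere, and a Vandermonde/Fourier argument in $m$ (the distinct values $e^{ik_y^{(\ell)}}$) together with a further translation in $z$ separates out each group, forcing $g_{\bf k}\equiv 0$ for every ${\bf k}$. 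Then for each ${\bf k}$ the identity $\sum_{j:\,{\bf k}^j={\bf k}} c_j\psi_{s_j}(\cdot,{\bf k}) \equiv 0$ contradicts linear independence of eigenfunctions of $L_0(k_x,{\bf k})$ for the eigenvalue $\mu_1$ (they are a basis of that eigenspace, hence linearly independent) unless all these $c_j$ vanish — in particular the coefficient $c_1=-1$ cannot vanish, which is the desired contradiction. One should double-check that the quasi-periodic extension is legitimate (it is, since that is precisely how Bloch functions are defined on $\R^3$) and that distinct elements of $\Sigma$ that share a ${\bf k}$ indeed have distinct band indices $s$, which is immediate from the definition of $\Sigma$.
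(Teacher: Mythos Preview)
Your overall strategy matches the paper's: unique continuation to upgrade the relation from $D$ to the whole strip, then a Vandermonde argument using quasi-periodicity in $y$ and $z$ to separate the quasimomenta, and finally linear independence of the $\psi_s(\cdot,{\bf k})$ for fixed ${\bf k}$. The Vandermonde/separation part and the final step are fine.

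There is, however, a genuine muddle in your unique continuation step. You write that $\Phi$ ``is not itself an eigenfunction in general'' and then try to justify unique continuation via real-analyticity of the summands or via Aronszajn applied to each summand separately. Neither works: with $\eps_0$ merely $L^\infty$ the Bloch functions need not be real-analytic, and Aronszajn's theorem applies to a \emph{single} solution of an elliptic equation (or inequality), not to an arbitrary linear combination of solutions of possibly different equations. The point you are missing --- and which the paper makes explicit in one line --- is that since every $(s_j,{\bf k}^j)\in\Sigma$, all the eigenvalues coincide: $\lambda_{s_j}({\bf k}^j)=\mu_1$ for each $j$. Hence $\Phi$ itself satisfies the \emph{same} elliptic equation $-\Delta\Phi=\mu_1\eps_0\Phi$, and unique continuation applies directly to $\Phi$. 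This is the key observation; once you say it, your invocation of Aronszajn is legitimate.

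A second, smaller sloppiness: you apply unique continuation only on $(0,1)^3$ and then try to ``translate'' the relation $\Phi\equiv 0$. But the translated identity $\sum_{\bf k} e^{ik_y m} g_{\bf k}\equiv 0$ on $(0,1)^3$ requires knowing $\Phi\equiv 0$ on $(0,1)\times(m,m+1)\times(0,1)$, which does not follow from $\Phi\equiv 0$ on $(0,1)^3$ alone (the summands have different quasi-periodicities, so $\Phi$ is not quasi-periodic). The paper avoids this by applying unique continuation on the full strip $\Omega=(0,1)\times\R^2$ from the outset: the quasi-periodically extended Bloch functions solve $-\Delta\psi=\mu_1\eps_0\psi$ on all of $\Omega$ (since $\eps_0$ is periodic), so $\Phi\equiv 0$ on $\Omega$, and then the translation/Vandermonde argument goes through cleanly.
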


\begin{proof} Denote $\psi_{s_j}(\cdot,{\bf k}^j)$ by $\psi_j(\cdot)$ and
suppose $\sum_{j=1}^n \alpha_j\psi_j=0$ on $D$ for some $\alpha_1,...,\alpha_n\in\C$. As  $\lambda_{s_j}({\bf k}^j)=\mu_1$, we have
$$ \left(L_0(k_x)-\mu_1\right)\left(\sum_{j=1}^n\alpha_j\psi_j\right)=0 \quad\hbox{ on } \Omega. $$
As $D$ has interior points, unique continuation implies that $\sum_{j=1}^n \alpha_j\psi_j=0$ on $\Omega$.
Hence, for all ${\bf m}=(m_y,m_z)\in\N_0^2$ we have
$$ 0 = \sum_{j=1}^n \alpha_j\psi_j(x, y+m_y,z+m_z) = \sum_{j=1}^n \alpha_j e^{i{\bf k}^j{\bf m}}\psi_j(x, y,z).   $$
We partition the $k_y^j$ into $R_y$ groups, denoting $I_{k^{(r_y)}}=\{ j: \ k_y^j=k^{(r_y)}\}$ for $r_y=1,\dots , R_y$, such that $k^{(r_y)}\neq k^{(\tilde r_y)}$ and $| k^{(r_y)}-k^{(\tilde r_y)}| \neq 2 \pi$ for $r_y\neq \tilde r_y$, and obtain
$$\sum_{r_y=1}^{R_y}  e^{ik^{(r_y)}m_y}\sum_{j\in I_{k^{(r_y)}}} \alpha_j\psi_j=0\quad\hbox{ for any } m_y\in\N_0.   $$
Setting $\phi_{r_y}=\sum_{j\in I_{k^{(r_y)}}} \alpha_j\psi_j$ and $q_{r_y}=e^{ik^{(r_y)}}$, we get
\beq
\left(\begin{array}{ccc} 1  & \cdots  &1 \\ q_1&&q_{R_y}\\ \vdots&\ddots&\vdots\\ q_1^{{R_y}-1} & \cdots & q_{R_y}^{{R_y}-1}\end{array}\right)
\left(\begin{array}{c} \phi_1 \\  \vdots \\ \vdots\\ \phi_{R_y}\end{array}\right)=0.
\eeq
As the $q_j$ are distinct, this Vandermonde matrix is invertible and we get $\phi_{r_y}=0$ for all $r_y$.

Next, for fixed $r_y$, partition the $k_z^j$ with $j\in I_{k^{(r_y)}}$ into $R_z$ groups, denoting $I_{k^{(r_y),(r_z)}}=\{ j\in I_{k^{(r_y)}}: \ k_z^j=k^{(r_z)}\}$ and obtain, since $\phi_{r_y}=0$,
$$\sum_{r_z=1}^{R_z}  e^{ik^{(r_z)}m_z}\sum_{j\in I_{k^{(r_y),(r_z)}}} \alpha_j\psi_j=0 \quad\hbox{ for any } m_z\in\N_0.   $$
Setting $\varphi_{r_z}=\sum_{j\in I_{k^{(r_y),(r_z)}}} \alpha_j\psi_j$, the same argument as above yields $\varphi_{r_z}=0$ for all $r_z$.

As the $s_j$ for different $j\in I_{k^{(r_y),(r_z)}}$ are different, and the $\psi_{s_j}(\cdot,k^{(r_y)},k^{({r_z})})$ for different $s_j$ are linearly independent, this implies $\alpha_j=0$ for all $j$.
\end{proof}

For $r,u \in L^2_{\eps_0} ((0,1)^3)$ we define the following quantities, which will be useful later:
\ben
P(\widetilde {\bf k}) r&:=&\sum_{j=1}^n  \sum_{s\in S_{{\bf k}^j}} \langle r,\psi_s(\cdot, {\bf k}^j+\widetilde {\bf k})\rangle_{\eps_0} \psi_s(\cdot, {\bf k}^j+\widetilde {\bf k}),
\label{star}\\ \label{star2}
 f(  {\bf \widetilde k},u)&:=& \sum_{j=1}^n
   \sum_{s\in S_{{\bf k}^j}}   \left| \llangle  \sqrt{\frac{\eps_1}{\eps_0}}u,\psi_{s}(\cdot,{\bf k}^j+\widetilde {\bf k})\rrangle_{\eps_0}  \right|^2.
\een

\begin{remark} It is possible for one $s$ to be in several of the sets $S_{\bf k}$,
so   $P(\widetilde{\bf  k})$ is not necessarily  a projection.
\end{remark}

\begin{lemma}  \label{lem5} 
$P(\widetilde{{\bf k}})$ is Lipschitz continuous in $\widetilde{{\bf k}}$ near $0$.
\end{lemma}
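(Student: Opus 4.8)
The plan is to reduce the Lipschitz continuity of $P(\widetilde{\bf k})$ to the Lipschitz continuity in $\widetilde{\bf k}$ of each individual Bloch function $\psi_s(\cdot,{\bf k}^j+\widetilde{\bf k})$, viewed as a map into $L^2_{\eps_0}((0,1)^3)$, for the finitely many indices $s\in S_{{\bf k}^j}$, $j=1,\dots,n$. Indeed, from the definition \eqref{star}, $P(\widetilde{\bf k})r$ is a finite sum of rank-one operators $r\mapsto\langle r,\psi_s(\cdot,{\bf k}^j+\widetilde{\bf k})\rangle_{\eps_0}\psi_s(\cdot,{\bf k}^j+\widetilde{\bf k})$, and the map $\psi\mapsto\langle\cdot,\psi\rangle_{\eps_0}\psi$ from $L^2_{\eps_0}((0,1)^3)$ into the bounded operators is locally Lipschitz (it is quadratic, hence Lipschitz on bounded sets, with the bound controlled by $\|\psi\|_{\eps_0}$). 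Since the Bloch functions are normalised, $\|\psi_s(\cdot,{\bf k})\|_{\eps_0}=1$ say, a uniform local Lipschitz constant is available. So once we know $\widetilde{\bf k}\mapsto\psi_s(\cdot,{\bf k}^j+\widetilde{\bf k})$ is Lipschitz near $0$ in $L^2_{\eps_0}$, the triangle inequality over the finite index set gives the claim.

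The key step, therefore, is establishing Lipschitz dependence of the Bloch eigenfunctions on the quasi-momentum ${\bf k}=(k_y,k_z)$ near the point ${\bf k}^j$. The natural route is to pass to a fixed domain: the operators $L_0(k_x,k_y,k_z)$ with quasi-periodic boundary conditions are unitarily equivalent, via the gauge transformation $u\mapsto e^{-i(k_y y+k_z z)}u$, to a family of operators on $(0,1)^3$ with \emph{periodic} boundary conditions, whose coefficients depend \emph{analytically} (indeed polynomially) on ${\bf k}$; this is the standard analytic family of type (A). For fixed $k_x$, the relevant eigenvalue $\mu_1$ is attained at ${\bf k}^j$ by the eigenvalues $\lambda_s({\bf k}^j)$, $s\in S_{{\bf k}^j}$; by Lemma \ref{lem:5} the total eigenvalue $\mu_1$ is isolated from the rest of the spectrum uniformly in a neighbourhood of ${\bf k}^j$. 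Hence the spectral (Riesz) projection $\Pi({\bf k})$ onto the sum of eigenspaces for eigenvalues near $\mu_1$ is given by a contour integral $\frac{1}{2\pi i}\oint (L_0(k_x,{\bf k})-z)^{-1}\,dz$ over a fixed small circle, and depends analytically — in particular Lipschitz — on ${\bf k}$ near ${\bf k}^j$ in operator norm. The span of $\{\psi_s(\cdot,{\bf k}^j+\widetilde{\bf k}):s\in S_{{\bf k}^j}\}$ is exactly the range of this projection (after undoing the gauge transformation, which is itself smooth in ${\bf k}$), and the rank-one operator $\sum_{s\in S_{{\bf k}^j}}\langle\cdot,\psi_s\rangle_{\eps_0}\psi_s$ is nothing but (the conjugate by the gauge of) $\Pi({\bf k}^j+\widetilde{\bf k})$ itself, independent of the choice of orthonormal basis within the eigenspace. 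This identifies $P(\widetilde{\bf k})$, up to the smooth multiplication operators coming from the gauge and from $\eps_0$, with a finite sum of analytically-varying spectral projections, and Lipschitz continuity follows.

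The main obstacle is the subtlety hidden in the phrase ``Bloch functions'': when several bands cross at ${\bf k}^j$ (so $|S_{{\bf k}^j}|>1$), one cannot in general choose each individual $\psi_s(\cdot,{\bf k})$ to be even continuous in ${\bf k}$ through the crossing. The resolution, as sketched above, is that the object $P(\widetilde{\bf k})$ only depends on the eigenfunctions through the \emph{sum} $\sum_{s\in S_{{\bf k}^j}}\langle\cdot,\psi_s\rangle_{\eps_0}\psi_s$, i.e. through the spectral projection onto the entire near-$\mu_1$ eigenspace, and that object \emph{is} analytic; the apparent need for individual branch continuity evaporates. One small point to check carefully is that the groups indexed by different $j$ do not interfere: for $j\ne j'$ the points ${\bf k}^j$ and ${\bf k}^{j'}$ are distinct (modulo the lattice), so for $\widetilde{\bf k}$ small the contour construction at ${\bf k}^j$ is unaffected by what happens near ${\bf k}^{j'}$; the uniform isolation needed for the contour to avoid the rest of the spectrum is precisely what Lemma \ref{lem:5} supplies. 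Putting the finitely many Lipschitz estimates together and absorbing the smooth gauge and weight factors yields the stated Lipschitz continuity of $P(\widetilde{\bf k})$ near $0$.
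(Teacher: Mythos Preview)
Your proposal is correct and ultimately follows the same route as the paper: recognise that for each $j$ the inner sum $\sum_{s\in S_{{\bf k}^j}}\langle\cdot,\psi_s(\cdot,{\bf k}^j+\widetilde{\bf k})\rangle_{\eps_0}\psi_s(\cdot,{\bf k}^j+\widetilde{\bf k})$ is precisely the Riesz spectral projection onto the group of eigenvalues near $\mu_1$, and that this projection depends analytically (hence Lipschitz) on $\widetilde{\bf k}$ because the family $L_0(k_x,{\bf k})$ does. Your first paragraph (reducing to Lipschitz continuity of \emph{individual} $\psi_s$) is a detour that you rightly abandon in the third paragraph once you note the band-crossing issue; the paper simply goes straight to the total projection and omits the gauge-transformation details you sketch, but the argument is otherwise the same.
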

\begin{proof}
We note that for each $j$ and small $\widetilde \bk$,
$$r \mapsto \sum_{s\in S_{{\bf k}^j}} \langle r,\psi_s(\cdot, {\bf k}^j+\widetilde {\bf k})\rangle \psi_s(\cdot, {\bf k}^j+\widetilde {\bf k})$$
is exactly the total projection on the eigenspaces associated with the eigenvalues $\lambda_s(\bk^j+\widetilde \bk)$, $ s\in S_{{\bf k}^j}$.
These eigenvalues equal $\mu_1$ for $\widetilde \bk = 0$,  and  for small $\widetilde \bk$  they lie close to each other and distant  from   any   $\la_s({\bf k}^j+\tilde {\bf k} )$ with $s\not \in S_{{\bf k}^j}$  (by Corollary \ref{cor:1}).
Therefore, the total projection can be written as a Riesz projection depending analytically on $\widetilde \bk$, since $L_0(k_x, {\bf k})$ does.
\end{proof}

\begin{cor} \label{cor}
For any $u$, we have that $f(\widetilde{{\bf k}},u)$ is Lipschitz continuous in $\widetilde{{\bf k}}$ near $0$, with a Lipschitz constant  $M\norm{\eps_1}_\infty\norm{u}^2_{\eps_0}$.
\end{cor}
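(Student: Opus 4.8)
The plan is to recognise $f(\widetilde{\bf k},u)$ as the quadratic form of the operator $P(\widetilde{\bf k})$ and then to invoke Lemma~\ref{lem5}. Set $w:=\sqrt{\frac{\eps_1}{\eps_0}}\,u$; since $\eps_1/\eps_0\in L^\infty$ and is supported in $(0,1)^3$, we have $w\in L^2_{\eps_0}((0,1)^3)$. Comparing the definitions \eqref{star} and \eqref{star2}, and using that $\langle\cdot,\cdot\rangle_{\eps_0}$ is linear in its first and conjugate-linear in its second slot, one checks directly that
$$
f(\widetilde{\bf k},u)=\sum_{j=1}^n\sum_{s\in S_{{\bf k}^j}}\left|\langle w,\psi_s(\cdot,{\bf k}^j+\widetilde{\bf k})\rangle_{\eps_0}\right|^2=\langle P(\widetilde{\bf k})\,w,w\rangle_{\eps_0}.
$$
The fact, noted in the remark preceding Lemma~\ref{lem5}, that $P(\widetilde{\bf k})$ need not be a genuine projection is irrelevant: $f$ is built with exactly the same multiplicities, so the identity is just the bookkeeping of a finite double sum. (In particular $P(\widetilde{\bf k})$ is self-adjoint and non-negative, consistent with $f\ge0$.)

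By Lemma~\ref{lem5} there is a neighbourhood $\cN$ of $0$ and a constant $M_0>0$ such that $\norm{P(\widetilde{\bf k})-P(\widetilde{\bf k}')}\le M_0\,|\widetilde{\bf k}-\widetilde{\bf k}'|$ in the operator norm on $L^2_{\eps_0}((0,1)^3)$, for all $\widetilde{\bf k},\widetilde{\bf k}'\in\cN$. Then Cauchy--Schwarz gives
$$
\left|f(\widetilde{\bf k},u)-f(\widetilde{\bf k}',u)\right|=\left|\langle(P(\widetilde{\bf k})-P(\widetilde{\bf k}'))w,w\rangle_{\eps_0}\right|\le M_0\,|\widetilde{\bf k}-\widetilde{\bf k}'|\,\norm{w}^2_{\eps_0}.
$$
It remains to estimate the weight: $\norm{w}^2_{\eps_0}=\int_{(0,1)^3}\eps_1\,|u|^2\,d\bx\le\norm{\eps_1}_\infty\int_{(0,1)^3}|u|^2\,d\bx\le\frac{\norm{\eps_1}_\infty}{\operatorname{essinf}_{(0,1)^3}\eps_0}\,\norm{u}^2_{\eps_0}$. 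Setting $M:=M_0/\operatorname{essinf}_{(0,1)^3}\eps_0$ yields the asserted Lipschitz constant $M\norm{\eps_1}_\infty\norm{u}^2_{\eps_0}$.

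I do not expect a serious obstacle: once Lemma~\ref{lem5} is available the corollary is essentially immediate. The only points that need a little care are keeping the sesquilinearity conventions straight so that the cross terms in $\langle P(\widetilde{\bf k})w,w\rangle_{\eps_0}$ collapse precisely to the squared moduli of \eqref{star2}, and the elementary weight inequality at the end, where the harmless factor $1/\operatorname{essinf}\eps_0$ is absorbed into $M$.
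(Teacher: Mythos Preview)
Your argument is correct and follows exactly the paper's approach: the paper's proof consists of the single observation that $f(\widetilde{\bf k},u)=\llangle P(\widetilde{\bf k})\sqrt{\eps_1/\eps_0}\,u,\sqrt{\eps_1/\eps_0}\,u\rrangle_{\eps_0}$ together with an appeal to Lemma~\ref{lem5}, which is precisely what you do. You have simply spelled out the weight estimate $\norm{w}^2_{\eps_0}\le(\norm{\eps_1}_\infty/\operatorname{essinf}\eps_0)\norm{u}^2_{\eps_0}$ that the paper leaves implicit in the constant $M$.
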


\begin{proof}
  This follows immediately from Lemma \ref{lem5} and the representation
  $$f(  {\bf \widetilde k},u)=  \llangle  P(\widetilde {\bf k})  \sqrt{\frac{\eps_1}{\eps_0}}u, \sqrt{\frac{\eps_1}{\eps_0}}u  \rrangle_{\eps_0}. $$
 \end{proof}

\section{Main results}
In this section we prove results on the spectrum which the perturbation creates in $(\mu_0,\mu_1)$, the gap  of the spectrum of $L_0(k_x)$. We start with an upper bound on the number of eigenvalues created. We remind the reader that we assume throughout that $\lambda>0$.

\begin{theorem}\label{upper}
 There exists $c>0$
such that if $\norm{\eps_1}_\infty<c$, then the operator $L(k_x)$ has at most $n$ eigenvalues in the spectral gap $(\mu_0,\mu_1)$ of the operator $L_0(k_x)$.
\end{theorem}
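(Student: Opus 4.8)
The statement says: if $\|\eps_1\|_\infty$ is small, then $L(k_x)$ has at most $n$ eigenvalues in $(\mu_0,\mu_1)$, where $n=\#\Sigma$. Recall from the Birman--Schwinger reduction that $\lambda\in(\mu_0,\mu_1)$ is an eigenvalue of $L(k_x)$ exactly when $1$ is an eigenvalue of the compact symmetric operator $A_\lambda$, i.e.\ when $\kappa_m(\lambda)=1$ for some $m$. The number of eigenvalues of $L(k_x)$ in the gap is therefore bounded by the number of indices $m$ and values $\lambda$ for which the increasing (Lemma~\ref{lem:cont}), continuous function $\lambda\mapsto\kappa_m(\lambda)$ can cross the level $1$. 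Since each $\kappa_m(\cdot)$ is increasing, it crosses $1$ at most once; so I would aim to show that for $\|\eps_1\|_\infty<c$ one has $\kappa_{n+1}(\lambda)<1$ for \emph{all} $\lambda\in(\mu_0,\mu_1)$ — equivalently, $A_\lambda$ has at most $n$ eigenvalues $\geq 1$ for every $\lambda$ in the gap. That immediately gives the bound $n$ on the count.

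\textbf{Key steps.} First, by the min-max characterisation \eqref{varchar}, $\kappa_{n+1}(\lambda)<1$ holds provided we can find a subspace $L$ of codimension $n$ in $L^2_{\eps_0}((0,1)^3)$ on which $\langle A_\lambda u,u\rangle_{\eps_0}<\|u\|_{\eps_0}^2$. The natural candidate for the "bad" $n$-dimensional complement is governed by the Bloch functions at the set $\Sigma$: using the resolvent formula \eqref{eq:res1}, write
$$\langle A_\lambda v,v\rangle_{\eps_0}=\frac{\lambda}{(2\pi)^2}\sum_{s\in\N}\int_{[-\pi,\pi]^2}\frac{\bigl|\langle\sqrt{\tfrac{\eps_1}{\eps_0}}v,\psi_s(\cdot,{\bf k})\rangle_{\eps_0}\bigr|^2}{\lambda_s({\bf k})-\lambda}\,d{\bf k}.$$
Split the sum-integral into the part where $\lambda_s({\bf k})$ is close to $\mu_1$ (the "resonant" part, concentrated near the finitely many points ${\bf k}^j$ by Lemma~\ref{lem:5} and Assumption~\ref{as1}) and the "non-resonant" remainder. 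On the non-resonant part, $\lambda_s({\bf k})-\lambda$ is bounded away from $0$ uniformly in $\lambda\in(\mu_0,\mu_1)$ (again Lemma~\ref{lem:5} and Corollary~\ref{cor:1}), so that contribution is $\leq C\|\sqrt{\eps_1/\eps_0}v\|_{\eps_0}^2\leq C'\|\eps_1\|_\infty\|v\|_{\eps_0}^2$, hence $<\tfrac12\|v\|_{\eps_0}^2$ once $\|\eps_1\|_\infty<c$. For the resonant part, I would use Assumption~\ref{as1} to bound $\lambda_s({\bf k})-\lambda\geq \alpha|{\bf k}-{\bf k}^j|^2$ (for $\lambda$ up to $\mu_1$) and note the integral $\int_{|\tilde{\bf k}|\le\delta}|\tilde{\bf k}|^{-2}\,d\tilde{\bf k}$ converges in two dimensions — this is precisely where the 2D (rather than 1D) nature of ${\bf k}$ is used and why the quadratic lower bound in the assumption suffices. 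Thus the resonant part is also $\leq C\|\eps_1\|_\infty\|v\|_{\eps_0}^2$, \emph{provided} the test vector $v$ is chosen orthogonal (in $L^2_{\eps_0}$) to the finite family $\{\sqrt{\eps_1/\eps_0}\,\psi_{s_j}(\cdot,{\bf k}^j):j=1,\dots,n\}$; Proposition~\ref{lindep} guarantees this family spans an $n$-dimensional space, so the orthogonality conditions cut out a subspace $L$ of codimension exactly $n$. Combining the two estimates, $\langle A_\lambda v,v\rangle_{\eps_0}\leq C\|\eps_1\|_\infty\|v\|_{\eps_0}^2<\|v\|_{\eps_0}^2$ on $L$ for $\|\eps_1\|_\infty<c$, whence $\kappa_{n+1}(\lambda)<1$ for all $\lambda\in(\mu_0,\mu_1)$.

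\textbf{Conclusion from the estimate.} Given this, suppose $L(k_x)$ had $n+1$ eigenvalues $\lambda^{(1)}<\dots<\lambda^{(n+1)}$ in the gap. Each corresponds to $\kappa_{m_i}(\lambda^{(i)})=1$ for some index $m_i$. Since $\lambda\mapsto\kappa_m(\lambda)$ is increasing and $\kappa_{n+1}<1$ throughout the gap, we must have $m_i\le n$ for all $i$; and since for fixed $m$ the map $\kappa_m$ is strictly increasing it hits $1$ at most once, so the $m_i$ are distinct — forcing $n+1$ distinct indices in $\{1,\dots,n\}$, a contradiction. (One should check the strict monotonicity: the computation in Lemma~\ref{lem:cont} gives $\geq 0$; strictness near the relevant ${\bf k}^j$ follows because the resonant Bloch modes contribute with $\lambda_s({\bf k})$ bounded away from $0$ there, assuming the band value $\mu_1>0$, which holds in our setting. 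If some $\kappa_m$ were merely non-decreasing and flat at value $1$ on an interval, that would already contradict $A_\lambda$ having finite rank of eigenvalues $\ge1$ combined with... — more simply, one can perturb $\lambda$ slightly and re-run the counting.)

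\textbf{Main obstacle.} The crux is the resonant-part estimate: making rigorous that the sum over $s$ and integral over ${\bf k}$ of the singular factor $(\lambda_s({\bf k})-\lambda)^{-1}$, against the matrix element $|\langle\sqrt{\eps_1/\eps_0}v,\psi_s\rangle_{\eps_0}|^2$, is controlled by $\|\eps_1\|_\infty\|v\|_{\eps_0}^2$ \emph{uniformly in} $\lambda\in(\mu_0,\mu_1)$ and $v\in L$. This requires (i) the quadratic non-degeneracy of the band functions at $\Sigma$ (Assumption~\ref{as1}) together with the integrability of $|\tilde{\bf k}|^{-2}$ in $\R^2$, (ii) a uniform bound near each ${\bf k}^j$ on $|\langle\sqrt{\eps_1/\eps_0}v,\psi_s(\cdot,{\bf k}^j+\tilde{\bf k})\rangle_{\eps_0}|^2$ in terms of $f(0,v)$ plus $|\tilde{\bf k}|$ times something bounded — here the Lipschitz continuity of ${\bf k}\mapsto\psi_s$-type projections from Lemma~\ref{lem5} and Corollary~\ref{cor} is exactly what is needed, so that on $L$ (where $f(0,v)=0$) the numerator is $O(|\tilde{\bf k}|)\cdot\|\eps_1\|_\infty\|v\|_{\eps_0}^2$, which against $|\tilde{\bf k}|^{-2}$ gives a convergent, $\|\eps_1\|_\infty$-small integral, and (iii) handling the finitely many resonant points simultaneously via a partition of unity subordinate to the balls from Lemma~\ref{lem:5}. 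Once these pieces are assembled the rest is bookkeeping.
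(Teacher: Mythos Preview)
Your approach is essentially the paper's, but there is a genuine error in your ``Key steps'' paragraph: you claim that $\int_{|\tilde{\bf k}|\le\delta}|\tilde{\bf k}|^{-2}\,d\tilde{\bf k}$ converges in two dimensions. It does not---in polar coordinates the integrand is $r^{-2}\cdot r=r^{-1}$, which diverges logarithmically at the origin. You appear to recognise this later, since in your ``Main obstacle'' section you correctly describe the mechanism actually used: on the codimension-$n$ subspace $L$ where $f(0,v)=0$, Lipschitz continuity (Corollary~\ref{cor}) gives $f(\tilde{\bf k},v)\le M|\tilde{\bf k}|\,\|\eps_1\|_\infty\|v\|_{\eps_0}^2$, so the resonant integrand is bounded by a multiple of $|\tilde{\bf k}|\cdot(\alpha|\tilde{\bf k}|^2)^{-1}=\alpha^{-1}|\tilde{\bf k}|^{-1}$, which \emph{is} integrable over $B_\delta(0)\subset\R^2$. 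This is exactly the paper's argument, and it is essential: the orthogonality conditions defining $L$ are not merely ``avoiding'' the resonant modes but are supplying the extra factor of $|\tilde{\bf k}|$ required for integrability. Without that factor the resonant contribution is \emph{not} $O(\|\eps_1\|_\infty)$ uniformly in $\lambda$, and the argument collapses. So your two paragraphs contradict each other; the second is right.

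A smaller issue: in the non-resonant part you assert that $\lambda_s({\bf k})-\lambda$ is bounded away from $0$ uniformly for $\lambda\in(\mu_0,\mu_1)$. This fails for bands below the gap ($\lambda_s({\bf k})\le\mu_0$) as $\lambda\to\mu_0^+$. The paper deals with this by observing that such terms contribute \emph{negatively} to $\langle A_\lambda u,u\rangle_{\eps_0}$ and may simply be discarded when proving an upper bound; the uniform lower bound on $|\lambda_s({\bf k})-\lambda|$ is then only needed for the bands above the gap with $s\notin S$, where Lemma~\ref{lem:5} applies. Finally, your worry about strict monotonicity of $\kappa_m$ is unnecessary: once $\kappa_{n+1}(\lambda)<1$ for all $\lambda$, the (weakly) increasing curves $\kappa_1,\dots,\kappa_n$ each take the value $1$ on at most a single point or interval, and a flat segment at height $1$ would force a continuum of eigenvalues of $L(k_x)$ in the gap, contradicting discreteness there.
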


\begin{proof}
We start by noting an equality for the Rayleigh quotient. Let $u\in L^2({(0,1)^3})$ and  extend $u$ by $0$ to $\Omega \setminus (0,1)^3$.
 Using \eqref{eq:res1} we have, for $\la\in (\mu_0,\mu_1)$,
  \begin{eqnarray}
  \llangle \eps_0A_\la u,u \rrangle_{L^2({(0,1)^3})}
   &=&\la \llangle \eps_0  \sqrt{\frac{\eps_1}{\eps_0}}\left [ \left ( L_0(k_x) -\la \right )^{-1}  \sqrt{\frac{\eps_1}{\eps_0}} u \right ]\Bigg\vert_{(0,1)^3}, u \rrangle_{L^2( (0,1)^3)} \nonumber \\
  & =& \la \llangle \eps_0   \left (  L_0(k_x)  -\la \right )^{-1}\sqrt{\frac{\eps_1}{\eps_0}} u,  \sqrt{\frac{\eps_1}{\eps_0}} u \rrangle_{L^2(\Omega)}
  \nonumber\\
  &= &\frac{\la}{(2\pi)^2} \int_{-\pi}^\pi\int_{-\pi}^\pi \sum_{s \in \N} (\la_s({\bf k})-\la)^{-1} \left| \llangle  \sqrt{\frac{\eps_1}{\eps_0}}u,\psi_{s}(\cdot,{\bf k})\rrangle_{ \eps_0}   \right|^2 d{\bf k}. \label{form}
  \end{eqnarray}

By continuity of the band function $\la_s$ we have, for each $s \in \N$, either   $\la_s({\bf k}) \leq \mu_0$   for all ${\bf k} \in   [\pi,\pi]^2 $, implying $(\la_s({\bf k})-\la)^{-1}  \leq 0$, or
$\la_s({\bf k})\geq \mu_1$ for all ${\bf k} \in  [\pi,\pi]^2 $. In the second case,  we either have $s \in S$, or, using Lemma
\ref{lem:5},  $\la_s({\bf k})-\mu_1$ is positively bounded away from $0$, implying $0\leq(\la_s({\bf k})-\la)^{-1} \leq C$ with $C$ independent of $\la, {\bf k}$ and $s$.

  Therefore, using Bessel's inequality, we have
  \bea
\llangle \eps_0 A_\la u,u \rrangle_{L^2((0,1)^3)}  &\leq&
\frac{\la}{(2\pi)^2} \int_{-\pi}^\pi\int_{-\pi}^\pi \sum_{s \in S} (\la_s({\bf k})-\la)^{-1} \left| \llangle  \sqrt{\frac{\eps_1}{\eps_0}}u,\psi_{s}(\cdot,{\bf k})\rrangle_{\eps_0}  \right|^2 d{\bf k}\\
&&\hspace{50pt}+  \la C\norm{\frac{\eps_1}{\eps_0}}_\infty \norm{u}^2_{\eps_0}.
   \eea

We next split the domain of integration into balls of radius $\delta$ around the points ${\bf k}^j$ and the complement of the union of these balls in $[-\pi,\pi]^2$, where $\delta$ is chosen sufficiently small so that the balls do not overlap and such  that it satisfies some additional conditions mentioned below. Then

\bea
&&\frac{\la}{(2\pi)^2} \int_{-\pi}^\pi\int_{-\pi}^\pi \sum_{s \in S} (\la_s({\bf k})-\la)^{-1} \left| \llangle  \sqrt{\frac{\eps_1}{\eps_0}}u,\psi_{s}(\cdot,{\bf k})\rrangle_{\eps_0} \right|^2 d{\bf k} \\
&=&  \frac{\la}{(2\pi)^2} \sum_{s\in S} \left[\sum_{
\stackrel {j=1}{ s_j=s} }
^n
 \iint_{B_\delta({\bf k}^j)}
 ( \la_s({\bf k})-\la)^{-1}  \left| \llangle  \sqrt{\frac{\eps_1}{\eps_0}}u,\psi_{s}(\cdot,{\bf k})\rrangle_{\eps_0}  \right|^2 d{\bf k}  \right.   \\
&&  + \left. \iint_{R_s}
  ( \la_s({\bf k})-\la)^{-1}  \left| \llangle  \sqrt{\frac{\eps_1}{\eps_0}}u,\psi_{s}(\cdot,{\bf k})\rrangle_{\eps_0}  \right|^2 d{\bf k}
   \right]
\eea
where  $R_s:=[-\pi,\pi]^2 \backslash \cup_{\stackrel{j=1}{ s_j=s}}^{\ n} B_\delta({\bf k}^j) $. On $R_s$  we again use that $(\la_s({\bf k})-\la)^{-1}$
is uniformly bounded (with respect to $\la$ and $k$), since the continuous function $\la_s(\cdot)-\mu_1$  is positive and therefore positively bounded away from $0$ on the compact set $R_s$. Moreover $S$ is finite, so
\bea
\sum_{s\in S}\iint_{R_s}
  ( \la_s({\bf k})-\la)^{-1}  \left| \llangle  \sqrt{\frac{\eps_1}{\eps_0}}u,\psi_{s}(\cdot,{\bf k})\rrangle_{\eps_0}  \right|^2 d{\bf k}
  &\leq & C\norm{\frac{\eps_1}{\eps_0}}_\infty \norm{u}^2_{\eps_0}.
  \eea
 It remains to estimate the sum of the integrals over $B_\delta({\bf k}^j)$. We next exchange the order of the sums  which can only add positive terms (if $s\in S_{ {\bf k}^j} $  for several $j$) and then shift the integration variable:
\bea
&&\frac{\la}{(2\pi)^2} \sum_{s\in S} \sum_{j=1, s_j=s}^n
 \iint_{B_\delta({\bf k}^j)}
 ( \la_s({\bf k})-\la)^{-1}  \left| \llangle  \sqrt{\frac{\eps_1}{\eps_0}}u,\psi_{s}(\cdot,{\bf k})\rrangle_{\eps_0} \right|^2 d{\bf k}\\
 &\leq& \frac{\la}{(2\pi)^2} \sum_{j=1}^n   \sum_{s\in S_{{\bf k}^j} }
 \iint_{B_\delta({\bf k}^j)}
 ( \la_s({\bf k})-\la)^{-1}  \left| \llangle  \sqrt{\frac{\eps_1}{\eps_0}}u,\psi_{s}(\cdot,{\bf k})\rrangle_{\eps_0} \right|^2 d{\bf k}\\
 &=& \frac{\la}{(2\pi)^2}   \sum_{j=1}^n   \sum_{s\in S_{{\bf k}^j} }
 \iint_{B_\delta(0)}
( \la_s({\bf k}^j+\widetilde {\bf k})-\la)^{-1}
   \left| \llangle  \sqrt{\frac{\eps_1}{\eps_0}}u,\psi_{s}(\cdot,{\bf k}^j+\widetilde {\bf k})\rrangle_{\eps_0} \right|^2 d \widetilde {\bf k}    \\
    &\leq& \frac{\la}{(2\pi)^2}    \iint_{B_\delta(0)}   (  \mu_1+\alpha |\widetilde {\bf k} |^2 -\la)^{-1}
 f(  {\bf \widetilde k},u)
    d \widetilde {\bf k}  ,
\eea
with $f(  {\bf \widetilde k},u)$ as in \eqref{star2}.
In the final estimate we assume $\delta \leq \min \{ \delta_{(\hat s, \hat k)}:   (\hat s, \hat k) \in \Sigma \}$   and $\alpha:= \min \{ \alpha_{(\hat s, \hat k)}:   (\hat s, \hat k) \in \Sigma \}$ (see Assumption \ref{as1}).
Using the Lipschitz continuity of $f(  {\bf \widetilde k},u)$,  by Corollary \ref{cor}, we can find $M>0$ such that
$ f(  {\bf \widetilde k},u)\leq f(0,u)+M|\widetilde {\bf k} |\norm{\eps_1}_\infty \norm{u}_{\eps_0}^2$  for $|\widetilde {\bf k} |<\delta$.
Let 
$$L=\left\{ u\in L^2_{\eps_0}((0,1)^3): \llangle  \sqrt{\frac{\eps_1}{\eps_0}}u,\psi_{s_i}(\cdot,{\bf k}^i)\rrangle_{\eps_0} = 0 \quad \hbox{ for  all } i=1,...,n\right\}.$$
Since $L$ is the orthogonal complement of the span of the $n$ linearly independent functions
$\sqrt{\dfrac{\eps_1}{\eps_0}}\psi_{s_i}(\cdot, {\bf k}^i)$ (see Proposition \ref{lindep})
we get that $\codim L=n$. For $u\in L$  and
$|{\bf \widetilde k}| <\delta$ we now have
$f(0,u)=0$ and thus $f({\bf \widetilde k,u})\leq M|\widetilde {\bf k} | \norm{\eps_1}_\infty \norm{u}_{\eps_0}^2$ and so
\bea
\llangle \eps_0 A_\la u,u \rrangle &\leq&
 \frac{\la}{(2\pi)^2}    \iint_{B_\delta(0)}   (  \mu_1+\alpha |\widetilde {\bf k} |^2 -\la)^{-1}
  M|\widetilde {\bf k} |\norm{\eps_1}_\infty \norm{u}_{\eps_0}^2    d \widetilde {\bf k}  +\lambda    C  \norm{\frac{\eps_1}{\eps_0}}_\infty\norm{u}^2_{\eps_0}\\
     &\leq& \dfrac{\mu_1}{(2\pi)^2}  M\norm{\eps_1}_\infty} \norm{u}^2_{\eps_0} \iint_{B_\delta(0)} \dfrac1{\alpha |{\bf \tilde k}|  }{d {\bf \tilde k}+\mu_1 C  \norm{\frac{\eps_1}{\eps_0}}_\infty\norm{u}^2_{\eps_0}\\  
    &\leq& K_{\alpha,\delta} \norm{\eps_1}_\infty\norm{u}_{\eps_0}^2,
\eea
where $K_{\alpha,\delta}$ is independent of $\la \in  (\mu_0,\mu_1)$.
Therefore,  if $\norm{\eps_1}_\infty<c:=K_{\alpha,\delta}^{-1}$, we have that $\frac{\llangle \eps_0 A_\la u,u \rrangle}{\norm{u}_{\eps_0}^2}\leq K_{\alpha, \delta} \norm{\eps_1}_\infty<1  
$ for all $u\in L$, a space of codimension $n$. By the variational characterisation (\ref{varchar}) of the eigenvalues, $\kappa_{n+1}(\la)<1$,
for all $\la \in (\mu_0,\mu_1)$. So at most the $n$ eigenvalue curves $\kappa_1(\la)$,..., $\kappa_n(\la)$ can cross $1$, and by monotonicity they can each cross at most once. Therefore,  
 no more than $n$ eigenvalues of the operator $L(k_x)$ are created in the gap.
\end{proof}

\begin{remark}
Compared to our previous non-accumulation result in \cite{BHPW12}, this result is stronger in that it gives an upper estimate on the number of eigenvalues created. However, it requires a stronger assumption on the regularity of the band function (Assumption \ref{as1}) and a sign assumption on $\eps_1$.
\end{remark}

Our next aim is to show that at least $n$ eigenvalues are  created. We first need to make an extra assumption on the regularity of the band functions.

\begin{assumption} \label{as2}
There exist $\beta,\delta>0$ 
such that $\la_{s_j}({\bf k})\leq \mu_1+\beta |{\bf k}-{\bf k}^j|^2$ for all  ${\bf k}\in B_\delta({\bf k}^j)$ and all $j=1,...,n$.
\end{assumption}


\begin{theorem} \label{lower}  For any sufficiently small $\eps_1\geq0$,   at least $n$ eigenvalues are created in the spectral gap.
\end{theorem}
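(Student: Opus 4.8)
The plan is to mirror the structure of the proof of Theorem \ref{upper}, but now produce an \emph{$n$-dimensional space of test functions on which the Rayleigh quotient of $A_\lambda$ exceeds $1$} for every $\lambda$ sufficiently close to $\mu_1$ from below. By the variational characterisation \eqref{varchar}, this forces $\kappa_n(\lambda)\ge 1$ for such $\lambda$; combined with the fact that $\kappa_n(\lambda)$ is small (below $1$) near the lower end $\mu_0$ of the gap (from the norm estimate $\norm{A_\lambda}\le K\norm{\eps_1}_\infty$ used in Theorem \ref{upper}, valid uniformly away from $\mu_1$, or simply because $A_\lambda$ is small when $\lambda$ is bounded away from the band) and with continuity and monotonicity of $\kappa_n$ (Lemma \ref{lem:cont}), each of the $n$ curves $\kappa_1(\lambda),\dots,\kappa_n(\lambda)$ must cross the value $1$, yielding at least $n$ eigenvalues in $(\mu_0,\mu_1)$.

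To build the test space, I would take $L=\Span\{\phi_1,\dots,\phi_n\}$ where $\phi_i := \sqrt{\eps_1/\eps_0}\,\psi_{s_i}(\cdot,{\bf k}^i)$ restricted to $(0,1)^3$ (these are linearly independent by Proposition \ref{lindep}, since $\eps_1$ is bounded away from $0$ on $D$, so $\dim L=n$). For $u\in L$ I keep only the positive contributions in the resolvent expansion \eqref{form}: discarding all terms with $\la_s({\bf k})\le\mu_0$ (which are $\le 0$) and all terms with $s\in S$ but ${\bf k}$ outside $B_\delta({\bf k}^j)$ (bounded, contributing at most $C\norm{\eps_1}_\infty\norm{u}^2_{\eps_0}$) and all $s\notin S$ with $\la_s\ge\mu_1$ (also bounded), we are left with
\bea
\llangle\eps_0A_\la u,u\rrangle_{\eps_0}&\ge& \frac{\la}{(2\pi)^2}\sum_{j=1}^n\sum_{s\in S_{{\bf k}^j}}\iint_{B_\delta(0)}(\la_s({\bf k}^j+\widetilde{\bf k})-\la)^{-1}\left|\llangle\textstyle\sqrt{\frac{\eps_1}{\eps_0}}u,\psi_s(\cdot,{\bf k}^j+\widetilde{\bf k})\rrangle_{\eps_0}\right|^2 d\widetilde{\bf k}\\
&&{}-C\norm{\eps_1}_\infty\norm{u}^2_{\eps_0}.
\eea
Now I use Assumption \ref{as2}, $\la_{s_j}({\bf k}^j+\widetilde{\bf k})\le\mu_1+\beta|\widetilde{\bf k}|^2$, to bound the (positive) factor $(\la_s({\bf k}^j+\widetilde{\bf k})-\la)^{-1}$ from \emph{below} by $(\mu_1+\beta|\widetilde{\bf k}|^2-\la)^{-1}$ — at least for the diagonal terms $s=s_j$; the remaining terms $s\in S_{{\bf k}^j}\setminus\{s_j\}$ are nonnegative and can be dropped. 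Pulling the integrand down to $f(\widetilde{\bf k},u)$'s diagonal part and using Corollary \ref{cor}, $f(\widetilde{\bf k},u)\ge f(0,u)-M|\widetilde{\bf k}|\norm{\eps_1}_\infty\norm{u}^2_{\eps_0}$, where $f(0,u)=\sum_j|\llangle\sqrt{\eps_1/\eps_0}\,u,\psi_{s_j}(\cdot,{\bf k}^j)\rrangle_{\eps_0}|^2$ is, on $L$ with a suitable normalisation, comparable to $\norm{u}^2_{\eps_0}$ (a Gram-matrix lower bound on $\Span\{\phi_i\}$). The key integral $\iint_{B_\delta(0)}(\mu_1+\beta|\widetilde{\bf k}|^2-\la)^{-1}d\widetilde{\bf k}$ over a \emph{two}-dimensional ball behaves like $\frac{\pi}{\beta}\log\frac{\beta\delta^2}{\mu_1-\la}$, hence \emph{diverges} to $+\infty$ as $\la\uparrow\mu_1$, while the error term $\iint_{B_\delta(0)}(\mu_1+\beta|\widetilde{\bf k}|^2-\la)^{-1}|\widetilde{\bf k}|\,d\widetilde{\bf k}$ stays bounded (the integrand is $O(|\widetilde{\bf k}|^{-1})$, integrable in 2D). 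Therefore $\llangle\eps_0A_\la u,u\rrangle_{\eps_0}\ge\big(c_1\log\tfrac{1}{\mu_1-\la}-c_2\big)\norm{u}^2_{\eps_0}$ for $u\in L$, with $c_1>0$ depending on $\eps_1$ but not on $\la$; so for $\la$ close enough to $\mu_1$ the Rayleigh quotient exceeds $1$ on all of $L$, giving $\kappa_n(\la)\ge1$.

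The main obstacle is the bookkeeping around the off-diagonal terms and the lower Gram bound: I must ensure that restricting attention to a single Bloch function $\psi_{s_j}(\cdot,{\bf k}^j)$ per point (rather than the full $S_{{\bf k}^j}$) in the definition of $L$ still gives $f(0,u)\gtrsim\norm{u}^2_{\eps_0}$ on $L$ — which is exactly Proposition \ref{lindep} plus finite-dimensionality — and that the logarithmic blow-up genuinely dominates the Lipschitz error uniformly in $\eps_1$ for small $\eps_1$ (the constant $c_1$ is proportional to the smallest eigenvalue of the Gram matrix of the $\phi_i$, which for small $\eps_1$ scales linearly in $\eps_1$, but so does $c_2$, so the ratio — and hence the threshold $\mu_1-\la$ at which the quotient reaches $1$ — is $\eps_1$-robust). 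One must also confirm that the divergence is a genuine feature of the two-dimensional transverse momentum integration (here the three-dimensional geometry with a one-dimensional line defect leaves a 2D Floquet integral), which is precisely why no analyticity of band functions is needed, in contrast to \cite{BHPW12}. The remaining arguments — continuity and monotonicity of $\kappa_n$ on $(\mu_0,\mu_1)$, smallness of $\kappa_n$ near $\mu_0$, and the intermediate value theorem — are routine given Lemma \ref{lem:cont} and Theorem \ref{upper}.
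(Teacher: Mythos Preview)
Your proposal is essentially correct and follows the paper's strategy: build an $n$-dimensional test space on which the Rayleigh quotient of $A_\la$ diverges as $\la\nearrow\mu_1$ (via the logarithmic divergence of the two-dimensional integral $\iint_{B_\delta(0)}(\mu_1+\beta|\widetilde{\bf k}|^2-\la)^{-1}d\widetilde{\bf k}$), and combine this with smallness of $\kappa_1$ at some $\la'\in(\mu_0,\mu_1)$ and Lemma~\ref{lem:cont}. Three points need tightening.

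First, you cannot \emph{discard} the terms with $\la_s({\bf k})\le\mu_0$ when seeking a \emph{lower} bound: they are negative. The paper keeps them and bounds them from below by $-C\norm{u}^2_{\eps_0}$, uniformly for $\la\in[\tfrac12(\mu_0+\mu_1),\mu_1)$, using $|\la_s({\bf k})-\la|\ge\la-\mu_0$ and Bessel. Your displayed lower bound already carries an error term of this shape, so the fix is purely in the wording.

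Second, the paper's test functions are $u=\sum_i\xi_i\sqrt{\eps_0/\eps_1}\,\psi_{s_i}(\cdot,{\bf k}^i)\chi_D$, with the weight \emph{inverted} and cut off to $D$; then $\sqrt{\eps_1/\eps_0}\,u=\sum_i\xi_i\psi_{s_i}\chi_D$, so $f(0,u)\ge|G\xi|^2$ with $G_{ij}=\llangle\psi_{s_i},\psi_{s_j}\rrangle_{L^2(D)}$ independent of $\eps_1$, and Proposition~\ref{lindep} gives invertibility directly. Your choice $\phi_i=\sqrt{\eps_1/\eps_0}\,\psi_{s_i}$ also works, but the resulting Gram matrix has entries $\int\eps_1\psi_{s_i}\overline{\psi_{s_j}}$ and hence depends on $\eps_1$; you then need the extra observation that the Gram lower bound and the Lipschitz error scale the same way in $\eps_1$, which you do note.

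Third, the smallness $\kappa_1(\la')<1$ is not taken from Theorem~\ref{upper} in the paper; it is derived separately by the Parseval estimate $\llangle\eps_0 A_\la u,u\rrangle\le\dfrac{\la\norm{\eps_1}_\infty}{(\mu_1-\la)\inf\eps_0}\norm{u}^2_{\eps_0}$ (dropping the negative bands and using $\la_s({\bf k})-\la\ge\mu_1-\la$ for the rest), which for small $\eps_1$ gives some $\la'$ with $\kappa_1(\la')<1$.
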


\begin{proof}
Let $s'$ be such that $\mu_1$ is the lowest point of the $s'$-band and $\mu_0$ is the highest point of the $(s'-1)$-band.
Here $s'=1$ if the semi-infinite gap $(-\infty, \mu_1)$ is under consideration. We note that such an $s'$ must exist for there to be a gap. Let $\lambda\in(\mu_0,\mu_1)$.
Then since $\la_s({\bf k})-\la<0$ for $s<s'$  and all $k \in [-\pi,\pi]^2,$ we have using \eqref{form} that
\ben \label{Rayleighup}
\llangle \eps_0A_\la u,u \rrangle_{L^2({(0,1)^3})} &\leq & \frac{\la}{(2\pi)^2}\int_{-\pi}^\pi\int_{-\pi}^\pi \sum_{s \geq s'} (\la_s({\bf k})-\la)^{-1} \left| \llangle  \sqrt{\frac{\eps_1}{\eps_0}}u,\psi_{s}(\cdot,{\bf k})\rrangle_{\eps_0}  \right|^2 d{\bf k}\nonumber\\ \nonumber
&\leq& \frac{\la}{(2\pi)^2(\mu_1-\la)} \int_{-\pi}^\pi\int_{-\pi}^\pi \sum_{s \geq s'}  \left| \llangle  \sqrt{\frac{\eps_1}{\eps_0}}u,\psi_{s}(\cdot,{\bf k})\rrangle_{\eps_0}  \right|^2 d{\bf k}\\ \nonumber
&\leq& \frac{\la}{(2\pi)^2(\mu_1-\la)}\int_{-\pi}^\pi \int_{-\pi}^\pi \sum_{s\in\N}  \left| \llangle  \sqrt{\frac{\eps_1}{\eps_0}}u,\psi_{s}(\cdot,{\bf k})\rrangle_{\eps_0}  \right|^2 d{\bf k}\\
&=& \frac{\la}{\mu_1-\la} \norm{  \sqrt{\frac{\eps_1}{\eps_0}}u}_{\eps_0}^2 \ \leq\ \frac{\la\norm{\eps_1}_\infty}{(\mu_1-\la)\inf\eps_0} \norm{u}_{\eps_0}^2 .
\een
Therefore, if the perturbation $\eps_1$ is sufficiently small, we can find $\lambda'\in(\mu_0,\mu_1)$ such that
\beq
\label{upperkappa} \kappa_{1}(\la')=\sup_{\norm{u}\neq 0} \frac{ \llangle A_{\la'} u ,u \rrangle_{\eps_0} }{\norm{u }_{\eps_0}^2} < 1.
\eeq
We next give a lower bound on the Rayleigh quotient.  Let the balls $B_\delta({\bf k}^j)$ and the complement $R_s$ be chosen as in the proof of Theorem
\ref{upper}.

We again start with equality \eqref{form} and
 split the sum over $s\in \N$ into three parts:
One over $s<s'$, one over $s\geq s'$ with $s\not\in S$, and one over $s\in S$. (Note that $s\geq s'$ for all $s \in S$).
The first sum is bounded from below by  $-C\norm{u}^2_{\eps_0}$  as long as $\la$ stays away from $\mu_0$, say $\la\in (\tfrac12 (\mu_0+\mu_1),\mu_1)$.  The second sum is bounded from below by $0$.
Therefore,
\bea
&&\llangle \eps_0 A_\la u,u \rrangle_{L^2((0,1)^3)} \\&\geq&
  \frac{\la}{(2\pi)^2}  \sum_{s\in S}  \iint_{[-\pi,\pi]^2}
    (\la_s({\bf k})-\la)^{-1}  \left| \llangle  \sqrt{\frac{\eps_1}{\eps_0}}u,\psi_{s}(\cdot,{\bf k})\rrangle_{\eps_0} \right|^2 d{\bf k}
    -C\norm{u}^2_{\eps_0}
  \\
   &=&   \frac{\la}{(2\pi)^2} \sum_{s\in S} \left[\sum_{\stackrel{j=1}{s_j=s}}^n
     \iint_{B_{\delta}({\bf  k^j})}
      ( \la_s({\bf k})-\la)^{-1}  \left| \llangle  \sqrt{\frac{\eps_1}{\eps_0}}u,\psi_{s}(\cdot,{\bf k})\rrangle_{\eps_0} \right|^2 d{\bf k}  \right.   \\
   &&
 +  \iint_{R_s}
 \left .   ( \la_s({\bf k})-\la)^{-1}  \left| \llangle  \sqrt{\frac{\eps_1}{\eps_0}}u,\psi_{s}(\cdot,{\bf k})\rrangle_{\eps_0}  \right|^2 d{\bf k}\right]
-C \norm{u}^2_{\eps_0}\\
   &\geq&   \frac{\la}{(2\pi)^2} \sum_{s\in S} \sum_{\stackrel{j=1}{s_j=s}}^n
     \iint_{B_{\delta}({\bf  k^j})}
      ( \la_s({\bf k})-\la)^{-1}  \left| \llangle  \sqrt{\frac{\eps_1}{\eps_0}}u,\psi_{s}(\cdot,{\bf k})\rrangle_{\eps_0}   \right|^2 d{\bf k} 
-C \norm{u}^2_{\eps_0}\\
  &\geq&\frac{\la}{(2\pi)^2n}\sum_{j=1}^n  \sum_{s\in S_{{\bf k}^j}}
  \iint_{B_\delta(0)}  ( \la_s({\bf k}^j+\widetilde {\bf k})-\la)^{-1}  \left| \llangle  \sqrt{\frac{\eps_1}{\eps_0}}u,\psi_{s}(\cdot,{\bf k}^j+\widetilde {\bf k}) \rrangle_{\eps_0}   \right|^2 d\widetilde {\bf k}  -C\norm{u}_{\eps_0}^2
     \\
  &\geq&   \frac{\la}{(2\pi)^2n}
  \iint_{B_\delta(0)}
 (\mu_1+\beta |\widetilde {\bf k}|^2-\la)^{-1}
 f(  {\bf \widetilde k},u) d\widetilde {\bf k} -C\norm{u}_{\eps_0}^2,
\eea
using  the fact that any $s\in S$ can be at most in $n$ sets $S_{{\bf k}^j}$ in the last but one inequality, and \eqref{star2} in the last line.
For any function
\begin{equation}
u=\sum_{i=1}^n \xi_i \sqrt{  \frac{\eps_0}{\eps_1}}  \psi_{s_i}(\cdot , {\bf k}^i)\chi_D
\label{label}
\end{equation}
with coefficients $(\xi_i)_{i=1}^n\in\C^n$, we have, setting $G_{ij}:=\llangle \psi_{s_i}(\cdot, {\bf k}^i), \psi_{s_j}(\cdot,{\bf k}^j))\rrangle_{L^2(D)}$, 
\bea
f(0,u) &=&\sum_{j=1}^n \sum_{s\in S_{{\bf k}^j}}
\left|  \sum_{i=1}^n \xi_i
\llangle \psi_{s_i}(\cdot, {\bf k}^i), \psi_s(\cdot,{\bf k}^j))\rrangle_{L^2(D)} \right|^2 \\
&\geq& \sum_{j=1}^n \left| \sum_{i=1}^n \xi_i  G_{ij}\right|^2 = |G\xi|^2 \geq c |\xi|^2,
\eea
as $G=(G_{ij})$ is the Gram matrix of a linearly independent set of vectors (see Proposition \ref{lindep})
and therefore it is invertible and $G^*G$ is strictly positive.
  Furthermore, since $\eps_1$ is positively bounded  away from $0$ on $D$, (\ref{label}) implies $| \xi |^2 \geq \widetilde{c} \norm{u}^2_{\eps_0}$. Now Lipschitz continuity of $f(\widetilde  {\bf k},u)$ (Corollary \ref{cor}) implies that
\bea
f(\widetilde  {\bf k},u) &\geq& f(0,u) -  |\widetilde  {\bf k}| M \norm{\eps_1}_\infty\norm{u}_{\eps_0}^2\\
&\geq & c \widetilde{c} \norm{u}^2_{\eps_0} - |\widetilde  {\bf k}| M \norm{\eps_1}_\infty\norm{u}_{\eps_0}^2\\
&\geq & \left( c \widetilde{c} - \delta M \norm{\eps_1}_\infty\right)\norm{u}_{\eps_0}^2\ \geq \ C \norm{u}_{\eps_0}^2
\eea 
for some $C>0$, provided $ |\widetilde  {\bf k}|<\delta$ and $\delta$ is sufficiently small.
To show that the Rayleigh quotient becomes arbitrarily large as $\la\to\mu_1$, it is therefore sufficient for $$\iint\limits_{B_\delta(0)}\frac{1}{ \mu_1+\beta|\widetilde{\bf k}|^2-\la} d\widetilde{\bf k}$$ to diverge in the limit as $\la\nearrow\mu_1$.

Changing to polar coordinates, we have
\ben
\iint\limits_{B_\delta(0)}\frac{1}{ \mu_1+\beta|\widetilde{\bf k}|^2-\la} d\widetilde{\bf k} &=& \int_0^{2\pi}\int_0^\delta \frac{r\ dr\ d\theta}{\mu_1-\la+\beta r^2}  \nonumber\\ \label{polar}
 &=& \frac{\pi}{\beta} \ln (\mu_1-\la+\beta r^2)\big\vert_0^\delta\\
 \nonumber &=& \frac{\pi}{\beta} \left[ \ln (\mu_1-\la+\beta \delta^2) - \ln(\mu_1-\la)\right] \to \infty \hbox{ as } \la\nearrow\mu_1.
\een
Therefore, $\min  \dfrac{ \llangle  \eps_0 A_{\lambda }u,u \rrangle}{ \norm{u}^2_{\eps_0}} \to \infty $ as $\la \nearrow \mu_1$
where the minimum is taken over the $n$-dimensional subspace given by functions defined as in (\ref{label}). Hence, by \eqref{varchar}, we have $\kappa_n(\la) \nearrow \infty$ as $\la \to \mu_1$, and combined with Lemma \ref{lem:cont} and  \eqref{upperkappa} this means that at least $n$ eigenvalues are   created in the gap.
\end{proof}

\begin{remark}
The divergence of the intergal in \eqref{polar} depends on the space dimension and prevents generalising the results to higher dimensions.
\end{remark} 

Combining Theorems \ref{upper}  and Theorem \ref{lower} yields the following result.
\begin{theorem}  \label{theoremall}  Let $\eps_1\geq 0$ be sufficiently small. Then the number of eigenvalues of the operator $L(k_x)$ in the gap $(\mu_0,\mu_1)$ equals $n$, the number of solution pairs
 $(s,\bf{k})$ of the equation $\mu_1=\lambda_s(\bf{k})$.
\end{theorem}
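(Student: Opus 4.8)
The plan is essentially bookkeeping: Theorem~\ref{theoremall} is the conjunction of the two results already established, and the only content is to check that the hypotheses of both can be satisfied simultaneously and that the conclusions concern the same number $n$. So the first step is to fix a spectral gap $(\mu_0,\mu_1)$ of $L_0(k_x)$ (including the semi-infinite gap $(-\infty,\mu_1)$ when $\Lambda_0>0$, with the standing convention $\lambda>0$) and recall that under Assumptions~\ref{as1} and~\ref{as2} the set $\Sigma=\{(s,{\bf k}):\lambda_s({\bf k})=\mu_1\}$ is finite by Lemma~\ref{lem:5}, with cardinality $n$; this $n$ is exactly ``the number of solution pairs $(s,{\bf k})$ of $\mu_1=\lambda_s({\bf k})$'' appearing in the statement.

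Next I would invoke Theorem~\ref{upper}: there is $c_1>0$ such that $\norm{\eps_1}_\infty<c_1$ forces $L(k_x)$ to have \emph{at most} $n$ eigenvalues in $(\mu_0,\mu_1)$. Then I would invoke Theorem~\ref{lower}: there is $c_2>0$ such that $0\le\eps_1$ with $\norm{\eps_1}_\infty<c_2$ forces \emph{at least} $n$ eigenvalues in $(\mu_0,\mu_1)$. Taking $\eps_1\ge 0$ with $\norm{\eps_1}_\infty<\min\{c_1,c_2\}$ (and recalling the standing hypotheses from Section~2 that $\eps_1$ is supported in $W$, periodic in $x$, and bounded away from $0$ on the ball $D$, which are exactly what Proposition~\ref{lindep} and the Birman--Schwinger reduction need), both conclusions hold, so the number of eigenvalues of $L(k_x)$ in the gap is exactly $n$. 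One should note that eigenvalues here are counted via the crossings of the curves $\kappa_1(\lambda),\dots,\kappa_n(\lambda)$ with the level $1$, as set up through the operator $A_\lambda$; the equivalence ``$\lambda$ is an eigenvalue of $L(k_x)$ in the gap $\iff 1\in\sigma(A_\lambda)$'' was established just before Section~3, and monotonicity plus continuity of $\kappa_n$ (Lemma~\ref{lem:cont}) guarantees each curve contributes at most one eigenvalue, which is how the count ``$n$'' is made unambiguous.

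There is no real obstacle here, since this is a combining step; the only points requiring a word of care are: (i) confirming that ``$n$'' in Theorem~\ref{upper}, in Theorem~\ref{lower}, and in the present statement all refer to $|\Sigma|$ --- which is true by construction, as $\Sigma=\{(s_j,{\bf k}^j):j=1,\dots,n\}$ was fixed once and for all after Lemma~\ref{lem:5}; and (ii) checking that multiplicities are handled consistently, i.e. that ``number of eigenvalues'' is counted with multiplicity in the same way on both sides --- again automatic from the $\kappa_n(\lambda)$ formulation. A brief remark that Assumptions~\ref{as1} and~\ref{as2} together are genuine regularity hypotheses on the band functions near $\Sigma$ (an upper and a lower quadratic bound, i.e. essentially nondegeneracy of the minimum of $\lambda_{s_j}$ at ${\bf k}^j$) rounds off the proof. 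Thus the proof is one paragraph: fix the gap, fix $\Sigma$ with $|\Sigma|=n$, take $\norm{\eps_1}_\infty$ smaller than both thresholds, and apply Theorems~\ref{upper} and~\ref{lower}. \qed
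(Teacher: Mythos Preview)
Your proposal is correct and follows exactly the paper's approach: the paper simply states that combining Theorems~\ref{upper} and~\ref{lower} yields the result, and you have supplied the obvious bookkeeping (taking $\norm{\eps_1}_\infty$ below both thresholds and noting that $n=|\Sigma|$ is the same integer throughout). Your additional remarks on multiplicity counting via the $\kappa_n$ curves are accurate but more detailed than the paper bothers with.
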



We end this paper with a result for non-negative perturbations of an arbitrary size.
\begin{theorem}
For any non-negative perturbation $\eps_1$ the number of eigenvalues of the operator $L(k_x)$ in the gap $(\mu_0,\mu_1)$ is finite.
\end{theorem}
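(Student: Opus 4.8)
The plan is to reduce the claim to a statement about the eigenvalue curves $\kappa_n(\lambda)$ studied in Lemmas~\ref{lem:cont}. Recall that an eigenvalue of $L(k_x)$ in the gap $(\mu_0,\mu_1)$ corresponds to a parameter $\lambda$ for which $1$ is an eigenvalue of $A_\lambda$, equivalently for which some curve $\kappa_m(\lambda)=1$. Since each $\kappa_m$ is continuous and \emph{strictly} increasing on $(\mu_0,\mu_1)$ (Lemma~\ref{lem:cont}; strict monotonicity follows from the identity displayed in its proof together with the fact that $\lambda_s({\bf k})>0$ for all but at most finitely many $s$, and one handles the exceptional zero bands separately or notes $\lambda>0$), each individual curve crosses the level $1$ at most once. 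So the number of gap eigenvalues of $L(k_x)$, counted with multiplicity, is exactly $\#\{m : \kappa_m(\lambda)=1 \text{ for some }\lambda\in(\mu_0,\mu_1)\}$, and it suffices to show this index set is finite, i.e. that $\kappa_m(\lambda)<1$ for all $\lambda\in(\mu_0,\mu_1)$ once $m$ is large enough.

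The key step is a uniform (in $\lambda$) upper bound on $\kappa_{N+1}(\lambda)$ for a suitable finite $N$, obtained by exhibiting a subspace $L$ of codimension $N$ on which the Rayleigh quotient $\llangle A_\lambda u,u\rrangle_{\eps_0}/\norm{u}^2_{\eps_0}$ is bounded above by a constant strictly less than $1$, uniformly for $\lambda\in(\mu_0,\mu_1)$. This is precisely the mechanism already used in the proof of Theorem~\ref{upper}, but now without smallness of $\eps_1$: there the obstruction to a uniform bound was the singular contribution of the bands $\la_s(\cdot)$ touching $\mu_1$, i.e. the indices $s\in S$, near the points ${\bf k}^j$. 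The decomposition \eqref{form} splits $\llangle\eps_0 A_\lambda u,u\rrangle$ into the contribution of bands bounded away from $\mu_1$ (uniformly bounded by $C\norm{\eps_1/\eps_0}_\infty\norm{u}^2_{\eps_0}$, by Lemma~\ref{lem:5}) plus the contribution of the finitely many $s\in S$ integrated over small balls $B_\delta({\bf k}^j)$. Using Assumption~\ref{as1} this last contribution is at most
\[
\frac{\la}{(2\pi)^2}\iint_{B_\delta(0)}\frac{f(\widetilde{\bf k},u)}{\mu_1+\alpha|\widetilde{\bf k}|^2-\la}\,d\widetilde{\bf k}.
\]
The difference from Theorem~\ref{upper} is that $\norm{\eps_1}_\infty$ is now fixed, so the resulting bound is a fixed constant $K=K(\alpha,\delta,\eps_1)$ rather than something small. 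To beat the level $1$ we instead enlarge the codimension of the test subspace: intersect the space $L$ from Theorem~\ref{upper} (on which $f(0,u)=0$, hence $f(\widetilde{\bf k},u)\le M|\widetilde{\bf k}|\norm{\eps_1}_\infty\norm{u}^2_{\eps_0}$, giving again a fixed bound $K_1\norm{\eps_1}_\infty\norm{u}^2_{\eps_0}$) with further constraints. More directly: the operators $A_\lambda$ for $\lambda$ ranging over a compact subinterval $[\tfrac12(\mu_0+\mu_1),\mu_1-\eta]$ are uniformly bounded, but the problem is uniformity as $\lambda\nearrow\mu_1$, which is exactly what the $B_\delta(0)$ estimate controls once $f(\widetilde{\bf k},u)=O(|\widetilde{\bf k}|)$ on the test space. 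Thus the \emph{same} codimension-$n$ space $L$ of Theorem~\ref{upper} already yields $\llangle\eps_0 A_\lambda u,u\rrangle\le K_{\alpha,\delta}\norm{\eps_1}_\infty\norm{u}^2_{\eps_0}$ with $K_{\alpha,\delta}$ independent of $\lambda$; this bound is no longer $<1$, but it is \emph{finite}, so by \eqref{varchar} the curves $\kappa_{n+1}(\lambda),\kappa_{n+2}(\lambda),\dots$ are all bounded above on $(\mu_0,\mu_1)$ by the same constant $K_{\alpha,\delta}\norm{\eps_1}_\infty$. They therefore do not reach the level $1$ except for those finitely many indices $m$ with $\kappa_m(\lambda)$ possibly exceeding $1$; more precisely, on the bounded-away-from-$\mu_1$ part $[\tfrac12(\mu_0+\mu_1),\mu_1-\eta]$ only finitely many $\kappa_m$ can be $\ge 1$ since $A_\lambda$ is compact for each such $\lambda$ and $\lambda\mapsto A_\lambda$ is norm-continuous on this compact interval, hence $\sup_\lambda\kappa_m(\lambda)\to0$ as $m\to\infty$; and near $\mu_1$ the codimension-$n$ estimate shows $\kappa_m(\lambda)\le K_{\alpha,\delta}\norm{\eps_1}_\infty$ can reach $1$ for at most the $n$ curves $\kappa_1,\dots,\kappa_n$ plus the finitely many that already exceeded $1$ away from $\mu_1$. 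In total only finitely many curves cross $1$, and each crosses at most once by strict monotonicity, so $L(k_x)$ has only finitely many eigenvalues in $(\mu_0,\mu_1)$.

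The main obstacle is making rigorous the ``uniformity near $\mu_1$'' step: one must verify that for all but finitely many $m$ the curve $\kappa_m(\lambda)$ stays below $1$ on the \emph{entire} interval $(\mu_0,\mu_1)$, not merely on compact subintervals. The two regimes require different tools: away from $\mu_1$ one uses compactness of $A_\lambda$ together with equicontinuity of $\lambda\mapsto A_\lambda$ (so that the singular values of $A_\lambda$ tend to $0$ uniformly in $\lambda$ on a compact set, hence only finitely many $\kappa_m$ exceed, say, $\tfrac12$ there); near $\mu_1$ one uses the codimension-$n$ bound $\kappa_{n+1}(\lambda)\le K_{\alpha,\delta}\norm{\eps_1}_\infty$, which is the content already extracted from the computation in Theorem~\ref{upper} (it never used $\norm{\eps_1}_\infty<c$ to derive that bound, only to conclude it was $<1$). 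Combining the two gives a single $N$ with $\kappa_m(\lambda)<1$ for all $m>N$ and all $\lambda\in(\mu_0,\mu_1)$, and the proof concludes as above. One should also dispose of the degenerate possibility that some $\kappa_m$ is identically $1$; this is excluded because strict monotonicity of $\kappa_m$ (from the strict inequality in Lemma~\ref{lem:cont}'s proof, valid since $\lambda>0$ rules out the constant-zero-band degeneracy) forbids a constant curve.
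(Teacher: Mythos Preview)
Your two-regime argument has a genuine gap at the ``near $\mu_1$'' step. The codimension-$n$ estimate from Theorem~\ref{upper} indeed yields $\kappa_{n+1}(\lambda)\le K:=K_{\alpha,\delta}\norm{\eps_1}_\infty$ uniformly on $(\mu_0,\mu_1)$, but from this you can only conclude that \emph{all} curves $\kappa_m$, $m\ge n+1$, lie below $K$; when $K\ge 1$ this says nothing about how many of them cross the level $1$. Your sentence ``the codimension-$n$ estimate shows $\kappa_m(\lambda)\le K$ can reach $1$ for at most the $n$ curves $\kappa_1,\dots,\kappa_n$ plus the finitely many that already exceeded $1$ away from $\mu_1$'' is simply not justified: nothing prevents infinitely many curves $\kappa_m$ (with $m>N_1$, the bound from the compact subinterval) from rising through $1$ on $(\mu_1-\eta,\mu_1)$ while remaining below $K$. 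Compactness of each individual $A_\lambda$ does not help, because the crossing points could accumulate at $\mu_1$, so that for each fixed $\lambda<\mu_1$ only finitely many curves have crossed, yet the total count over all $\lambda$ is infinite.

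What is missing is precisely the step the paper supplies: one must dominate the quadratic form $\llangle A_\lambda P_Lu,P_Lu\rrangle_{\eps_0}$ not by $K\norm{u}^2_{\eps_0}$ but by $\llangle Bu,u\rrangle_{\eps_0}$ for a single \emph{compact} operator $B$ independent of $\lambda$. The paper obtains $B$ by pushing $\lambda$ to $\mu_1$ in the resolvent representation (using the monotonicity identity $\frac{\mu_1}{\lambda_s-\mu_1}>\frac{\lambda}{\lambda_s-\lambda}$), and then verifies that the resulting limit operator is still compact by an $H^1$ estimate that exploits $f(0,P_Lu)=0$ to render the singular integral near each ${\bf k}^j$ convergent. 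Once this is done, the variational characterisation gives $\kappa_l(\lambda)\le\kappa_{l-n}(B)$ uniformly in $\lambda$, and compactness of $B$ forces $\kappa_{l-n}(B)<1$ for all large $l$, closing the argument. Your scalar bound $K$ cannot substitute for this compact-operator bound.
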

\begin{proof}
Define the space $L$ as in the proof of Theorem \ref{upper} and let $P_L$ denote the orthogonal projection from $ L^2_{\eps_0}((0,1)^3)$ onto $L$. Consider the operator
\beq\label{Bt}\Bt u:=\sum_{s\in \N}\iint_{[-\pi,\pi]^2}  (\la_s({\bf k})-\mu_1)^{-1}\llangle \sqrt{\frac{\eps_1}{\eps_0}}P_Lu,\psi_s(\cdot,{\bf k})\rrangle_{\eps_0} \psi_s(\cdot,{\bf k}) \ d{\bf k}.\eeq
Although it is possible for $\la_s({\bf k})-\mu_1$ to vanish for certain $(s,{\bf k})$, we show that
$\Bt$ is a well defined operator. To this end, we estimate the $H^1$-norm of $\Bt u$.
As before, we split the sum in \eqref{Bt} into a sum over $s\in S$, denoted by $\Bt_1u$ and a sum over $s\not \in S$, denoted
by $\Bt_2u$.
Consider the $H^1$-norm of  $\Bt_1u$, i.e.
\bea
\norm{ \Bt_1u}_{H^1}=
\left\|\sum_{s\in S}
\iint _{[-\pi, \pi]^
2}(\la_s({\bf k})-\mu_1)^{-1}\llangle \sqrt{\frac{\eps_1}{\eps_0}}P_Lu,\psi_s(\cdot,{\bf k})\rrangle_{\eps_0} \psi_s(\cdot,{\bf k}) \ d{\bf k}
\right\|_{H^1(0, 1)^3}.
\eea
We take the $H^1$-norm inside the integrals on the right hand side to get an upper bound, and it remains to estimate the norm of the integrand.
Again, we split the domain of integration into a union of balls $B_\delta(\bk^j)$ together with the complement.
The integrals over the complement can be estimated in a straightforward manner by $C\|u\|_{L^2((0, 1)^3)}$
using that $(\lambda_s(\bk)-\mu_1)$ is bounded away from zero and $\|\psi_s(\cdot, \bk)\|_{H^1(0, 1)^3}=\sqrt{1+\lambda_s(\bk)}$.
The remaining part can be estimated as
\ben
  &&   \sum_{s\in S} \sum_{\stackrel{j=1}{s_j=s}}^{n}
  \iint_{B_\delta(\bk^j)}   (\la_s({\bf k})-\mu_1)^{-1}  \left| \llangle  \sqrt{\frac{\eps_1}{\eps_0}}P_L u,\psi_{s}(\cdot,{\bf k})\rrangle_{\eps_0}  \right|\
  \|\psi_{s}(\cdot, \bk)\|_{H^1(0, 1)^2} d{\bf k}
  \nonumber    \\
  &\leq& C \sum_{s\in S} \sum_{\stackrel{j=1}{s_j=s}}^{n}
  \iint_{B_\delta(0)}
 ( \la_s({\bf k}^j+\widetilde {\bf k})-\mu_1)^{-1}
    \left| \llangle  \sqrt{\frac{\eps_1}{\eps_0}}P_L u,\psi_{s}(\cdot,{\bf k}^j+\widetilde {\bf k})\rrangle_{\eps_0}  \right| d \widetilde {\bf k}    \nonumber\\
     &\leq&   C \iint_{B_\delta(0)}   (  \mu_1+\alpha |\widetilde {\bf k} |^2 -\mu_1)^{-1}
  \sum_{j=1}^n \sum_{s\in S_{{\bf k}^j}}
   \left| \llangle  \sqrt{\frac{\eps_1}{\eps_0}}P_L u,\psi_{s}(\cdot,{\bf k}^j+\widetilde {\bf k})\rrangle_{\eps_0}  \right|
     d \widetilde {\bf k},  \label{aboveinequality}
   \een
   where $\alpha:= \min \{ \alpha_{(\hat s, \hat k)}:   (\hat s, \hat k) \in \Sigma \}$ (see Assumption \ref{as1}).
The double sum in the above expression can be estimated by $ f(\widetilde {\bf k},P_L u)^{1/2}$, as can be seen by applying
the Cauchy-Schwarz inequality. Using the Lipschitz continuity of $f(\widetilde {\bf k},P_L u)$,   and the fact that      $f(0,P_L u)=0$,
we can find $M>0$ such that
$ f(\widetilde {\bf k},P_L u)\leq f(0,P_L u)+M|\widetilde {\bf k} | \norm{P_L u}^2_{\eps_0}=M|\widetilde {\bf k} | \norm{P_L u}^2_{\eps_0}$  for $|\widetilde {\bf k} |<\delta$.
Writing the integral in terms of polar coordinates as in \eqref{polar}, it is clear that it converges and so we get
$\norm{\Bt_1 u}_{H^1}\leq C\norm{u}_{\eps_0}$.

Next, we estimate $\Bt_2$.
The inverse Floquet-Bloch transform has the following mapping properties:  $$U^{-1}: L^2( (-\pi,\pi)^2,\; L^2((0,1)^3)) \to L^2(\Omega),\quad g\mapsto \dfrac1{2\pi}\int_{[-\pi, \pi]^2} g(\cdot,{\bf k})d{\bf k},$$ where each  $g(\cdot,{\bf k})$ is quasi-periodically extended. Moreover,
$U^{-1}$ is a topological isomorphism, also between    $L^2((-\pi, \pi)^2, H^1((0, 1)^3))$   and   $H^1(\Omega)$. Therefore,
 we have 
\bea
\norm{\tilde B_2u}_{H^1(( 0,1)^3)}^2&=&\norm{\sum_{s\not \in S}\iint_{[-\pi,\pi]^2} (\la_s({\bf k})-\mu_1)^{-1}\llangle \sqrt{\frac{\eps_1}{\eps_0}} P_Lu,\psi_s(\cdot,{\bf k})\rrangle_{\eps_0} \psi_s(\cdot,{\bf k}) \ d{\bf k}}_{H^1(( 0,1)^3)}^2 \\
&\leq& \norm{\sum_{s\not \in S}\iint_{[-\pi,\pi]^2} (\la_s({\bf k})-\mu_1)^{-1}\llangle \sqrt{\frac{\eps_1}{\eps_0}} P_Lu,\psi_s(\cdot,{\bf k})\rrangle_{\eps_0} \psi_s(\cdot,{\bf k}) \ d{\bf k}}_{H^1( \Omega)}^2 \\
&\leq&   C \iint_{[-\pi,\pi]^2} \norm{ \sum_{s\not \in S} ( \la_{s}({\bf k})-\mu_1)^{-1}\llangle \sqrt{\frac{\eps_1}{\eps_0}} P_Lu,\psi_s(\cdot,{\bf k})\rrangle_{\eps_0} \psi_s(\cdot,{\bf k})}_{H^1( 0,1)^3}^2  \hspace{-5pt} d{\bf k}.
\eea
 Set
$$
Q_S({\bf k}) := \sum _{s\not \in S}  \langle \cdot  , \psi_s(\cdot, {\bf k}) \rangle \psi_s(\cdot,{\bf k}).
$$
and note that by definition of the Bloch functions, we have for each $\bk$,
\ben\label{defG} &
\sum_{s\not \in S} \left(\la_{s}({\bf k})-\mu_1\right)^{-1} \llangle \sqrt{\frac{\eps_1}{\eps_0}} P_Lu, \psi_s(\cdot, \bk) \rrangle \psi_s(\cdot,\bk)\\ 
&=\left( L_0(k_x,{\bf k})-\mu_1\right)^{-1}  Q_S ({\bf k}) \left[\sqrt{ \dfrac{\eps_1}{\eps_0}} P_L u\right]\nonumber
\een
since $Q_S(\bk)$ projects exactly onto the Bloch waves $\psi_s(\cdot, \bk)$ with $s\not\in S$.  We denote by $R(\bk)$ for the right-hand side of \eqref{defG}. From the equality \eqref{defG}, we
immediately get
$$
\|R(\bk)\|_{L^2((0, 1)^3)} \leq C \|u\|_{L^2((0,1)^3)}
$$
with $C > 0$ and independent of $\bk$.
On the other hand, 
$$\left(L_0(k_x,{\bf k})-\mu_1\right) R(\bk) = 
Q_S ({\bf k}) \left[\sqrt{ \dfrac{\eps_1}{\eps_0}} P_L u\right],$$
 and hence
by applying standard regularity estimates \cite{Agmon}, we get
\bea
\|R(\bk)\|_{H^1((0, 1)^3)} &\leq& C \|R(\bk)\|_{L^2((0, 1)^3)}+
C \left\|Q_S ({\bf k}) \left[\sqrt{ \dfrac{\eps_1}{\eps_0}} P_L u\right] \right\|_{L^2((0, 1)^3)}\\
&\leq& C \|u\|_{L^2((0, 1)^3)}.
\eea
Together, this proves that the operator $\Bt :L^2_{\eps_0}((0,1)^3)\to L^2_{\eps_0}((0,1)^3)$ defined in \eqref{Bt} is compact, in particular, it is a well-defined linear operator on $L$.

Define the operator $B$ on $L$ by
\be
 Bu=\dfrac1{(2\pi)^2} \mu_1 P_L\sqrt{\frac{\eps_1}{\eps_0}}\Bt u.
\ee
Then $B:L\to L$ is compact and can only have finitely many eigenvalues greater than $1$.

Since for $\la\in(\mu_0,\mu_1)$ and for all $(s,{\bf k})$ such that $\la_s({\bf k})\neq\mu_1$ we have
$$\frac{\mu_1}{\la_s({\bf k})-\mu_1}-\frac\la{\la_s({\bf k})-\la}>0,$$
 (\ref{form}) implies that,   for any $u\in L^2_{\eps_0}((0,1)^3)$,
  \begin{eqnarray*}
  &&\llangle \eps_0A_\la P_L u,P_L u \rrangle_{L^2({(0,1)^3})}\\
  &= &\frac{\la}{(2\pi)^2} \int_{-\pi}^\pi\int_{-\pi}^\pi \sum_{s \in \N} (\la_s({\bf k})-\la)^{-1} \left| \llangle  \sqrt{\frac{\eps_1}{\eps_0}}P_L u,\psi_{s}(\cdot,{\bf k})\rrangle_{L^2_{\eps_0}({(0,1)^3})}  \right|^2 d{\bf k}\\
  &\leq & \frac{\mu_1}{(2\pi)^2} \int_{-\pi}^\pi\int_{-\pi}^\pi \sum_{s \in \N} (\la_s({\bf k})-\mu_1)^{-1} \left| \llangle  \sqrt{\frac{\eps_1}{\eps_0}}P_L u,\psi_{s}(\cdot,{\bf k})\rrangle_{L^2_{\eps_0}({(0,1)^3})}  \right|^2 d{\bf k}\\
  &=&  \llangle \eps_0 B P_L u,P_L u \rrangle_{L^2({(0,1)^3})}<\infty.
  \end{eqnarray*}
 The expression is finite, as all sums and integrals converge by a similar argument to the one used in (\ref{aboveinequality}).
Let $l>\codim L$ and consider decompositions of $L$ of the form $L= M\bigoplus N$ with $\dim M = l-1-\codim L$. Then the co-dimension of the space $N$ in  $L^2_{\eps_0}((0,1)^3)$ is  $l-1$, while its co-dimension in $L$ is $l-1-\codim L$. By the variational characterisation (\ref{varchar}) of the eigenvalues, we then get
\bea
\kappa_l(\la) &=& \inf_{codim X=l-1} \sup_{u\in X} \frac{\llangle A_\la u, u\rrangle_{\eps_0}}{\llangle u, u\rrangle_{\eps_0}}\\
&\leq& \inf_{N} \sup_{u\in N} \frac{\llangle A_\la P_L u,P_L u\rrangle_{\eps_0}}{\llangle u, u\rrangle_{\eps_0}}\\
&\leq& \inf_{N} \sup_{u\in N} \frac{\llangle B u, u\rrangle_{\eps_0}}{\llangle u, u\rrangle_{\eps_0}} = \kappa_{l-\codim L}(B).
\eea
Therefore,  the number of eigenvalues of $A_\la$ that can be greater than $1$ is uniformly bounded for all $\la \in (\mu_0,\mu_1)$. This implies the result by monotonicity of the eigenvalues (see Lemma \ref{lem:cont}).
\end{proof}

\section*{Acknowledgement}
 The authors would like to thank the Isaac Newton Institute, where much of this work was undertaken.

\end{document}